\newcommand{\br}{\textbf{r}}
\newcommand{\E}{\textbf{E}}
\newcommand{\U}{\textbf{U}}
\newcommand{\W}{\textbf{W}}
\newcommand{\F}{\textbf{F}}
\newcommand{\G}{\textbf{G}}
\renewcommand{\S}{\textbf{S}}
\newcommand{\nbZ}{\mathbb{Z}}
\newcommand{\nbN}{\mathbb{N}}
\newcommand{\nbR}{\mathbb{R}}
\newcommand{\om}{\overline{m}}
\newcommand{\WRP}{\W_\text{RP}}
\newcommand{\peos}{\ensuremath{p^\text{EOS}}}
\newcommand{\mround}{\ensuremath{\mathscr{M}}}
\newcommand{\IMEXLOC}[0]{$\text{IMEX}_\text{loc}$}
\newcommand{\IMEXGLOB}[0]{$\text{IMEX}_\text{glob}$}
\newcommand{\EXEXLOC}[0]{$\text{EXEX}_\text{loc}$}
\newcommand{\EXEXGLOB}[0]{$\text{EXEX}_\text{glob}$}
\newcommand{\EXEXK}[0]{$\text{EXEX}_\text{k}$}
\newcommand{\IMEXK}[0]{$\text{IMEX}_\text{k}$}
\newtheorem{lemma}{Lemma}[]
\newtheorem{proposition}{Proposition}[]
\begin{document}

\begin{center}
{\Large A large time-step and well-balanced Lagrange-Projection type scheme for the shallow-water equations} \\ ~\\

C. Chalons${}^{1}$, %
P. Kestener${}^{2}$,
S. Kokh${}^{2,3}$ 
and M. Stauffert${}^{1,2}$

\end{center}

\footnotetext[1]{Laboratoire de Math\'ematiques de Versailles, UVSQ, CNRS, Universit\'e Paris-Saclay,
78035 Versailles, France.}
\footnotetext[2]{Maison de la Simulation USR 3441, Digiteo Labs, b\^at. 565, PC 190, CEA Saclay, 91191 Gif-sur-Yvette, France.}
\footnotetext[3]{CEA/DEN/DANS/DM2S/STMF -  CEA Saclay, 91191 Gif-sur-Yvette, France.}

\begin{abstract} 
This work focuses on the numerical approximation of the Shallow Water Equations (SWE) using a Lagrange-Projection type approach. We propose to extend to this context the recent implicit-explicit schemes developed in %
\cite{ccmgsk2013}, %
\cite{ccmgsk-cicp} in the framework of compressible %
flows, with or without stiff source terms. These methods enable the use of time steps that are no longer constrained by the sound velocity thanks to an implicit treatment of the acoustic waves, and maintain accuracy in the subsonic regime thanks to an explicit treatment of the material waves. In the present setting, a particular attention will be also given to the discretization of the non-conservative terms in SWE and more specifically to the well-known well-balanced property. We prove that the proposed numerical strategy enjoys important non linear stability properties and we illustrate its behaviour past several relevant test cases. 

\end{abstract}
\makeatletter{}%
\section{Introduction}

We are interested in the design of a numerical scheme for the well-known Shallow Water Equations (SWE), given by
\begin{subequations}\label{stvenant1}
\begin{empheq}[left=\empheqlbrace]{align}
&\partial_t h + \partial_x(hu) = 0, 
 \\ 
&\partial_t (hu)+\partial_x \left( hu^2 + g \frac{h^2}{2}\right) = -gh \partial_x z,
\end{empheq}
\end{subequations}
where $z(x)$ denotes a given smooth topography and $g > 0$ is the gravity constant. 
The primitive variables are the water depth $h \geq 0$ and its velocity $u$, which both depend on the space and time variables, respectively $x \in \mathbb{R}$ and $t \in [0,\infty)$. 
At time $t=0$, we assume that the initial water depth $h(x,t=0)=h_0(x)$ and velocity $u(x,t=0)=u_0(x)$ are given.
In order to shorten the notations, we will use the following condensed form~of~\eqref{stvenant1}, namely
\begin{equation}\label{stvenant1_cond}
\partial_t \U + \partial_x \F(\U)=\S(\U,z), 
\end{equation}
where 
$\U=( h, hu )^T$, 
$\F(\U)=( hu, hu^2+gh^2/2)^T$ and
$\S(\U,z)=(0, -gh \partial_x z)^T$.
This system is supplemented with the validity of entropy inequalities which can be  written either in a non-conservative form as follows,
\begin{equation}\label{entropyIneq}
\partial_t \mathcal{U} \left( \U \right) + \partial_x \mathcal{F} \left( \U \right) \leqslant -ghu\partial_x z,
\end{equation}
with the non-conservative entropy $\mathcal{U}$ and the associated flux $\mathcal{F}$ defined by
$$
\mathcal{U}\left( \U \right) = \dfrac{hu^2}{2} + \dfrac{gh^2}{2}, \quad
\mathcal{F}\left( \U \right) = \left( \dfrac{u^2}{2} + gh \right) hu,
$$
or in conservative form 
as follows,
\begin{equation}\label{consEntropyIneq}
\partial_t \tilde{\mathcal{U}} \left( \U\,, z \right)  + \partial_x \tilde{\mathcal{F}}\left( \U\,, z \right) \leqslant 0,
\end{equation}
where the {conservative entropy} $\tilde{\mathcal{U}}$ and the associated flux $\tilde{\mathcal{F}}$ now depend on $z$ and are defined by,
$$
\tilde{\mathcal{U}} \left( \U \,, z \right) = 
  \mathcal{U}\left( \U \right) + ghz\, \quad \text{and} \quad \tilde{\mathcal{F}}\left( \U\,, z \right) = \mathcal{F}\left( \U \right) + ghuz.
$$
The proposed numerical scheme should be consistent with~\eqref{stvenant1_cond} and should satisfy a discrete form of (at least) one of these entropy inequalities. 

The steady states of \eqref{stvenant1_cond} are governed by the ordinary differential system 
$\partial_x F(\U) = \S(\U,z)$, namely 
$$
  hu = \text{constant} 
  ,\quad 
\displaystyle 
\frac{u^ 2}{2}+g(h+z)=\text{constant} 
.
$$
In this paper, we will be more specifically interested in the so-called 
"lake at rest" steady solution defined by 
\begin{equation}
  h + z = \text{constant}, \quad 
  u = 0.
  \label{genLaR}
\end{equation}
The proposed numerical scheme should be able to preserve discrete initial conditions matching \eqref{genLaR}, which corresponds to the very well-known well-balanced property (see for instance the recent book \cite{livregosse} for a review). 

A third objective of the method is to ensure the positivity 
of the water height if the initial water height is positive. 

Last but not least, we are especially interested in this work 
in subsonic or near low-Froude number flows. In this case, it turns out that 
the usual CFL time step limitation of Godunov-type numerical schemes is driven by the acoustic waves and can thus be very restrictive. We are thus interested in the design of a mixed implicit-explicit large time-step strategy following the lines of the pionneering work \cite{quote6} and the more recent ones 
\cite{ccskns2011}, \cite{ccmgsk2013}, \cite{ccmgsk2014}, \cite{ccmgsk-cicp}. By large time-step, 
we mean that the scheme should be stable under a CFL stability condition driven by the (slow) material waves, and not by the (fast) acoustic waves as it is customary in Godunov-type schemes. Numerical evidences will show a gain %
in efficiency.

There is a huge amount of works about the design of numerical schemes for the SWE, and most of the schemes intended to satisfy the first three properties above. To mention only a few of them, we refer the reader to the following well-known contributions  \cite{2}, \cite{23}, \cite{20}, \cite{25}, \cite{ps}, 
\cite{16}, \cite{17}, \cite{1}, \cite{15}, 
\cite{xing2}, \cite{12}, 
\cite{15}, \cite{8}, \cite{xing4}, \cite{xing3}, 
\cite{4}, \cite{noellelukacova}, \cite{xing1}, \cite{acu}, \cite{cb}, \cite{cornet2016}. 
We also refer to the books \cite{5} and \cite{livregosse} which provide additional  references and very nice overviews.

The design of mixed implicit-explicit (IMEX) schemes based on a Lagrange-Projection type approach which are stable under a CFL restriction driven by the slow material waves and not the acoustic waves has been given a first interest in the pionneering work \cite{quote6} and was further developed for the computation of large friction or low-Mach regimes in \cite{ccskns2011}, \cite{ccmgsk2013}, \cite{ccmgsk2014}, \cite{ccmgsk-cicp}, \cite{ccmgsk-jcp} for single or two-phase flow models. It is the purpose of this paper to adapt these IMEX strategies to the shallow-water equations while preserving the first three properties above, namely the lake-at-rest well-balanced property, the positivity of the water height, and the validity of a discrete form of the entropy inequality. 
Another new large time step method for the shallow water flows in the low
Froude number limit has been proposed in \cite{noellelukacova}. 
The strategy is also mixed implicit-explicit considering the fast acoustic waves and the slow transport waves respectively, but does not rely on the natural Lagrange-Projection like decomposition proposed here. Note also that we focus 
here on subsonic or low Froude number flows, but we do not consider the low Froude number limit which is the purpose of a current work in progress. We also refer the reader to the recent contribution \cite{hamed} which proves rigorously that the IMEX
Lagrange projection scheme is AP for one-dimensional low-Mach isentropic Euler and low-Froude shallow water equations.
\makeatletter{}%

\section{Operator splitting Lagrange-Projection approach and relaxation procedure} \label{faiiii}
In this section we adapt the so-called operator splitting Lagrange-Projection strategy presented in \cite{ccmgsk2013} %
to the Shallow Water Equations~\eqref{stvenant1}. This splitting involves a so-called Lagrange step system that accounts for the acoustic waves and topography variations for which we shall propose an approximation based on a Suliciu~\cite{suliciu1998} relaxation approach using the notion of consistency in the integral sense~\cite{16,17}, and a so-called transport step accounting for the (slow) transport phenomenon.

Before describing the numerical method, we introduce classic notations pertaining to our discretization context. Space and time are discretized using a space step $\Delta x$ and a time step $\Delta t$ into a set of cells $[x_{j-1/2},x_{j+1/2})$ and instants 
$t^{n+1} = n \Delta t$, where $x_{j+1/2}=j \Delta x$ and 
$x_j = (x_{j-1/2} + x_{j+1/2})/2$ are respectively the cell interfaces and cell centers, for $j\in\nbZ$ and $n\in\nbN$.
For a given initial condition $x\mapsto \U^0(x)$, we consider a discrete initial data $\U^{0}_{j}$ defined by
$
\U^{0}_{j} = \frac{1}{\Delta x} \int_{x_{j-1/2}}^{x_{j+1/2}}
\U^0(x)\,\text{d}x
$, for $j\in\nbZ$.
The algorithm proposed in this paper aims at computing an approximation $\U^{n}_{j}$ of $\frac{1}{\Delta x} \int_{x_{j-1/2}}^{x_{j+1/2}}
\U(x,t^n)\,\text{d}x$
where $x \to \U(x,t^n)$ is the exact solution of \eqref{stvenant1} at time $t^n$.

\subsection{Acoustic/transport operator decomposition} \label{atod}
We describe here a procedure that allows to approximate the evolution of
 the system~\eqref{stvenant1} over a time interval $[t^n,t^n+\Delta t)$.
The guideline of the method consists in decoupling the terms responsible for the acoustic, the topography variations and the transport phenomena. 
In the sequel if $ \peos: 1/h \mapsto g h^2/2$ we shall note $p = \peos(1/h)$,
and define the sound velocity $c$ of \eqref{stvenant1} by
$ c^2 = \frac{\text{d}}{\text{d} h} [(\peos)(1/h)] = {g h }$.  
By using the chain rule for the space derivatives we split up the operators of system~\eqref{stvenant1} so that it reads for smooth solutions
\begin{empheq}[left=\empheqlbrace]{align*}
&\partial_t h + u\partial_x h + h\partial_x u = 0
\\
&\partial_t (hu)  + u\partial_x (hu) + hu \partial_x u  
+ \partial_x p = -g h \partial_x z
.
\end{empheq}
Consequently, we propose to approximate the solutions of \eqref{stvenant1}
by approximating the solutions of the following two subsystems, namely
\begin{subequations}
\begin{empheq}[left=\empheqlbrace]{align}
&\partial_{t}h + h \partial_{x}u = 0,
\\
&\partial_{t}(h u) + h u\partial_{x}u + \partial_{x}p 
= - g h \partial_x z,
\end{empheq}
\label{eq: acoustic original form}
\end{subequations}
and
\begin{subequations}
\begin{empheq}[left=\empheqlbrace]{align}
&\partial_{t}h + u\partial_{x}h= 0,
\\
&\partial_{t}(h u) + u\partial_{x}(h u) = 0, 
\end{empheq}
\label{eq: transport}
\end{subequations}
one after the other. System~\eqref{eq: acoustic original form} deals with the acoustic effects and the topography variation, while system~\eqref{eq: transport} involves the material transport. In the following we shall refer to \eqref{eq: acoustic original form} as the acoustic or Lagrangian system and \eqref{eq: transport} as the transport or projection system.

The overall algorithm can be described as follows: for a given discrete state $\U^{n}_{j}=(h,hu)^{n}_{j}$, $j\in\nbZ$ that describes the system at instant $t^n$, the update to the $\U^{n+1}_j=(h,hu)^{n+1}_{j}$ is a two-step process defined by
\begin{enumerate}
\item Update $\U^{n}_{j}$ to $\U^{n+1-}_{j}$ by approximating the solution of \eqref{eq: acoustic original form},
\item Update $\U^{n+1-}_{j}$ to $\U^{n+1}_{j}$ by approximating the solution of \eqref{eq: transport}.
\end{enumerate}

\subsection{Relaxation approximation of the acoustic system}
If we note $\tau={1}/{h}$ the specific volume, the acoustic system~\eqref{eq: acoustic original form} takes the form
\begin{subequations}
\begin{empheq}[left=\empheqlbrace]{align}
\partial_{t} \tau - \tau \partial_x u &= 0,
\\
\partial_{t} u  +  \tau \partial_x p &= -g \partial_x z,
\\
\partial_t z &=0
.
\end{empheq}
\label{eq: acoustic system tau}
\end{subequations}
It is straightforward to check that the quasilinear system~\eqref{eq: acoustic system tau} is strictly hyperbolic over the space $\{(\tau,u,z)^T\in\nbR^3~|~\tau>0\}$ and the eigenstructure of the system is composed by three fields associated with the eigenvalues $\{-c,0,c\}$. The wave associated with $\pm c$ (resp. $0$) is genuinely nonlinear (resp. a stationary contact discontinuity). Let us underline that the material velocity $u$ is not involved in the characteristic velocities of \eqref{eq: acoustic system tau} but only the sound velocity $c$.

For $t\in[t^{n},t^{n}+\Delta t)$, we propose to 
approximate 
$\tau(x,t)\partial_x\cdot$ by $\tau(x,t^{n})\partial_x \cdot$ 
and
$\partial_x z  = \tau(x,t) \partial_x z / \tau(x,t)$
by
$\tau(x,t^{n}) \partial_x z / \tau(x,t)$
in \eqref{eq: acoustic system tau}. 
If one introduces the mass variable $m$ defined by $\text{d}m (x) = \tau(x,t^{n})^{-1} \text{d} x$, up to a slight abuse of notations,
system~\eqref{eq: acoustic system tau} can be recast into
\begin{subequations}
\begin{empheq}[left=\empheqlbrace]{align}
\partial_{t} \tau -  \partial_m u &= 0,
\\
\partial_{t} u  +   \partial_m p &= -g \partial_m z / \tau,
\\
\partial_t z &= 0. 
\end{empheq}
\label{eq: acoustic system m var}
\end{subequations}
Let us note that, when the topography is flat, system~\eqref{eq: acoustic system m var} is consistent 
with the usual form of the barotropic gas dynamics equations in Lagrangian coordinates with a mass space variable (see for instance \cite{godlewski-raviart-2}). 

We carry on with the approximation process of the acoustic system~\eqref{eq: acoustic original form} by using a Suliciu-type relaxation approximation of \eqref{eq: acoustic system m var}, see \cite{suliciu1998}. We will see in the sequel that this strategy will allow us to design a simple and not expensive time implicit treatment of \eqref{eq: acoustic original form} in order to remove the usual CFL restriction associated with the fast acoustic waves $\pm c$. 
The design principle of the so-called pressure relaxation methods is now very well-known, see for instance \cite{suliciu1998,quote18,quote22,quote16,quote15,quote17,5}
and the references therein and consists in introducing 
a larger system with linearly degenerate characteristic fields so that the underlying Riemann problem is easy to solve. To do so, we introduce a new independent variable pressure $\Pi$ that can be seen as a linearization of the pressure $p$.
While the pressure $p$ verifies
$
\partial_t p + (c/\tau)^2 \partial_m u = 0
$
when $\tau$ and $u$ are smooth solutions of \eqref{eq: acoustic system tau}, the surrogate pressure
 $\Pi$ is evolved according to its own partial differential equation. 
Within the time interval $t\in[t^{n},t^{n}+\Delta t)$, 
we propose to consider the following relaxation system
\begin{subequations}
\begin{empheq}[left=\empheqlbrace]{align}
\partial_{t}\tau - \partial_{m}u
&= 0
,\\
\partial_{t}u + \partial_{m}\Pi 
&= -g\partial_m z / \tau
,\\
\partial_{t}\Pi + a^{2}\partial_{m}u 
&= \lambda(p(\mathcal{T})-\Pi)
,\\
z_t &= 0
,
\end{empheq}
\label{eq: acoustic m relaxation source}
\end{subequations}
where $a>0$ is a constant whose choice will be specified later on, $\lambda>0$ is the relaxation parameter, and $\mathcal{T}$ obeys the well-defined (under appropriate conditions on $a$) implicit relation
$$
\Pi = p(\mathcal{T}) + a^2(\mathcal{T}-\tau).
$$
System~\eqref{eq: acoustic m relaxation source} is indeed an approximation of \eqref{eq: acoustic system m var}
in the sense that in the asymptotic regime
$\lambda \rightarrow +\infty$ we have, at least formally that $\Pi \rightarrow p$ and we recover \eqref{eq: acoustic system m var}, see \cite{quote15} for a rigourous proof for both smooth and discontinuous solutions. 
Let us also briefly recall that this relaxation model can be endowed with a relaxation entropy defined by
\begin{equation} \label{defsigmabbaro}
\Sigma =\frac{u^2}{2} - \int_{}^{\mathcal{T}} p^{EOS}(\tau) d\tau + 
\frac{\Pi^2-(p^{EOS})^2(\mathcal{T})}{2 \, a^2} \, ,
\end{equation}
which is such that $h \Sigma$ coincides with the entropy $\mathcal{U}$ at equlibrium $\mathcal{T}=\tau$, and by the chain rule and for smooth 
solutions easily satisfies
\begin{equation}
\partial_t \Sigma +\partial_m \Pi u =
-\frac{\lambda}{a^2} (p(\mathcal{T})-\Pi)^2 - g \frac{u}{\tau} \partial_m z,
\end{equation}
which is nothing but a relaxation and Lagrangian form of (\ref{entropyIneq}).
Note that the first term of the right-hand side is negative so that the proposed relaxation process is entropy consistent in the sense of \cite{cll}. 

We adopt the classic method that allows to reach the $\lambda\to\infty$: at each time step, we enforce the equilibrium relation
$\Pi_{i}^{n} = \peos(\tau_{i}^{n})$ and solve \eqref{eq: acoustic m relaxation source} with $\lambda=0$.
In order to prevent this relaxation procedure from generating instabilities, it is now well established that $a$ must be chosen sufficiently large in agreement with the Whitham subcharacteristic condition
\begin{equation}
a > \max \left( c(\tau) / \tau\right),
\label{eq: whitham condition}
\end{equation}
when $\tau$ spans the values 
of the solution of \eqref{eq: acoustic m relaxation source} 
for $t\in[t^{n},t^{n}+\Delta t)$ (see again the above references). 
For $\lambda = 0$, system~\eqref{eq: acoustic m relaxation source} 
can take the compact form
\begin{equation}
\partial_t \W
+\partial_m \G(\W)
=
\left(-\frac{g}{\tau}\partial_m z \right)
\mathbf{E}_2
,
\label{eq: acoustic m relaxation} 
\end{equation}
where 
$
\W=(\tau,u,\Pi,z)^T
$
,
$
\G(\W)=(-u,\Pi,a^2 u,0)^T
$
,
$
\E_2=(0,1,0,0)^T
$.
Let us discuss a few properties of \eqref{eq: acoustic m relaxation}. First,
 it can be easily proved that \eqref{eq: acoustic m relaxation} is strictly
 hyperbolic and involves  four linearly degenerate characteristic fields
 associated with the characteristic velocities $\{- a, 0, +a\}$ that are
 nothing but approximations of the eigenvalues 
 of \eqref{eq: acoustic system m var}. The jump relations involved with each 
 field are detailed in appendix~\ref{appendix: eigenstructure acoustic relaxed}.
 The non-conservative product that features in \eqref{eq: acoustic system m var} is well defined for smooth $z$ under consideration here.

Before going any further, let us observe that 
\eqref{eq: acoustic m relaxation}  can be recast into the following equivalent form
\begin{subequations}
\begin{empheq}[left=\empheqlbrace]{align}
\partial_{t}\tau - \partial_{m}u 
&= 0,
\\
\partial_{t} \overrightarrow{w} + a \partial_{m} \overrightarrow{w} 
&= - a g \partial_m z / \tau
\\
\partial_{t} \overleftarrow{w} - a \partial_{m} \overleftarrow{w} 
&=  + a g \partial_m z / \tau
,
\\
\partial_t z
&= 0
\end{empheq}
\label{eq: acoustic relaxed sym var}
\end{subequations}
where the new variables $\overrightarrow{w}$ and $\overleftarrow{w}$ are defined by 
$
\overrightarrow{w} = \Pi + au, \quad \overleftarrow{w} = \Pi - au.
$
These quantities are nothing but the strong Riemann invariants associated with the characteristic speeds 
$\pm a$ of the relaxation system (\ref{eq: acoustic relaxed sym var}) when the topography terms are omitted.
The closure relations 
for (\ref{eq: acoustic relaxed sym var}) are naturally given by 
$$
u = \frac{\overrightarrow{w}-\overleftarrow{w}}{2a}, \quad 
\Pi = \frac{\overrightarrow{w}+\overleftarrow{w}}{2}.
$$
This new formulation will be used in the sequel to study a time-implicit discretization of \eqref{eq: acoustic m relaxation}. 

We now need to propose a discretization strategy for \eqref{eq: acoustic m relaxation}. Unfortunately, the classic relaxation solver strategy cannot be carried on here since the solution of the Riemann problem associated with
\eqref{eq: acoustic m relaxation} cannot be defined easily. Indeed it is not possible to properly define the non-conservative term $\partial_m z /\tau$ with
a piecewise constant initial value for $z$. However we will see in the next section that it is 
possible to derive an approximate Riemann solver for \eqref{eq: acoustic m relaxation} using a discretization of the non-conservative product that is 
consistent (in a sense to be specified later) with the smooth term $(g/\tau)\partial_m z$. 

\subsection{Approximate Riemann solver for the acoustic system}
\label{section: approximate riemann solver}
Let $\Delta m_L>0$, $\Delta m_R>0$ and suppose given a smooth function $m \mapsto z(m)$. If $\om\in\nbR$, we consider a piecewise initial data defined by 
\begin{equation}
\W(m,t=0)
=
\begin{cases}
\W_L = (\tau_L, u_L, \Pi_L ,z_L)^T, & \text{if $m \leq \om$,}
\\
\W_R = (\tau_R, u_R, \Pi_R ,z_R)^T, & \text{if $m > \om$,}
\end{cases}
\end{equation}
where $\Pi_k$ and $z_k$, $k=L,R$ are defined by
$$
\Pi_L = \peos(\tau_L)
,\quad
\Pi_R = \peos(\tau_R)
,\quad
z_L = \frac{1}{\Delta m_L} \int^{0}_{-\Delta m_L} z(\overline{m} + r) \text{d}r
,\quad
z_R = \frac{1}{\Delta m_R} \int_{0}^{\Delta m_R} z(\overline{m} + r) \text{d}r
.
$$
Note that $\Pi_L$ and $\Pi_R$ are at equilibrium. 
Let us now build an approximate Riemann solver for the relaxed acoustic system~\eqref{eq: acoustic m relaxation}. We seek for a function $\WRP$ composed by four states separated by discontinuities as follows
\begin{equation}
\WRP
\left(
\frac{m-\om}{t}
;
\W_L, \W_R
\right)
=
\begin{cases}
\W_L,& \text{if $ \frac{m-\om}{t} \leq -a$, }
\\
\W_L^*,& \text{if $ -a  < \frac{m-\om}{t} \leq 0$, }
\\
\W_R^*,& \text{if $0 < \frac{m-\om}{t} \leq a$, }
\\
\W_R,& \text{if $ a < \frac{m-\om}{t} $, }
\end{cases}
\label{eq: approx riemann solver structure}
\end{equation}
where the intermediate states are such that the following three consistency  properties hold true (see \cite{16}, \cite{17}):

a) $\WRP$ is consistent in the integral sense with the Shallow-Water Equations, more specifically in our context:
if $\Delta t$ is such that
$a \Delta t \leq \min(\Delta m_L, \Delta m_R) / 2$, then
\begin{equation}
\G(\W_R) - \G(\W_L)
=
-a (\W_L^* - \W_L) + a (\W_R - \W_R^*)
-
\frac{\Delta m_L + \Delta m_R}{2}
\left\{\frac{g}{\tau} \partial_m z\right\},
\label{eq: integral consistency}
\end{equation}
where  $\left\{\frac{g}{\tau} \partial_m z\right\}$ is consistent with the influence of the source term, in the sense that
\begin{equation}
\lim_{\substack{
\Delta m_L, \Delta m_R \to 0
\\
\W_L, \W_R \to (\overline{\tau},\overline{u},\overline{\Pi}, z(\om))
}} 
\left\{\frac{g}{\tau} \partial_m z\right\}
=
\frac{g}{\overline{\tau}} (\partial_m z)(\om)
;
\label{eq: topo approx consistency}
\end{equation}

b) In the case of constant bottom, \text{i.e.} $z_L = z_R$, $\WRP$ must degenerate towards the classic solution of the Riemann problem of the acoustic relaxed system~\eqref{eq: acoustic m relaxation} for a flat bottom $\partial_m z = 0$;

c) If $\W_L$ and $\W_R$ verify the lake at rest condition
\begin{equation}
u_L = u_R = 0
,\qquad
1/\tau_L + z_L = 1/\tau_R + z_R,
\label{eq: lake at rest}
\end{equation}
then $\W_L^* = \W_L$ and $\W_R^* = \W_R$.

Defining a proper function $\WRP$ thus simply boils down to proposing intermediate states $\W_L^*$ and $\W^*_R$ that comply with a), b) and c). 
We proceed as follows: first we impose that $\W_L$ and $\W_L^*$ (resp. $\W_R^*$ and $\W_R$) verify the jump conditions
\begin{equation}
a (\W_L^* - \W_L) + \G(\W_L^*) - \G(\W_L) = 0
,\quad
-a (\W_R - \W_R^*) + \G(\W_R) - \G(\W_R^*) = 0
.
\label{eq: wrp extreme waves}
\end{equation}
This amounts to say that the discontinuity of velocity $\pm a$ of $\WRP$ behaves like the $(\pm) a$-wave of system~\eqref{eq: acoustic m relaxation} for a flat bottom.
Similarly, across the discontinuity of velocity $0$ we impose that 
\begin{equation}
u_L^* = u_R^* =: u^*
.
\label{eq: u_L^* = u_R^*}
\end{equation}
Relations~\eqref{eq: wrp extreme waves} and \eqref{eq: u_L^* = u_R^*} does not provide enough information to determine the intermediate states $\W_L^*$ and $\W_R^*$. Indeed, they provide us with only seven independent relations while we aim at defining eight quantities, namely the four components of each $\W_L^*$ and $\W_R^*$. 

We choose to add another jump relation across the stationary discontinuity of $\WRP$ that complies with condition b): we impose that
\begin{equation}
\Pi_R^* - \Pi_L^* + \mround = 0,
\label{eq: jump pi}
\end{equation}
where $\mround$ is a function to be specified such that $\mround=0$ if $z_L = z_R$. Relations~\eqref{eq: acoustic m relaxation}, \eqref{eq: u_L^* = u_R^*} and \eqref{eq: jump pi} allow to solve for $\W_L^*$ and $\W_R^*$ and we obtain
\begin{equation}
\left\{
\begin{aligned}
\tau_L^* &= \tau_L + \frac{1}{2a} \left(u_R-u_L\right) - \frac{1}{2a^2} \left(\Pi_R-\Pi_L\right) - \frac{\mround}{2a^2},  
\\
\tau_R^* &= \tau_R + \frac{1}{2a} \left(u_R-u_L\right) + \frac{1}{2a^2} \left(\Pi_R-\Pi_L\right) + \frac{\mround}{2a^2},  
\\
u^*  &= u_R^* = u_L^* = \frac{u_R+u_L}{2} - \frac{1}{2a} \left(\Pi_R-\Pi_L\right) 
- \frac{\mround}{2a},  
\\
\Pi^* &= \frac{\Pi_R+\Pi_L}{2} - \frac{a}{2} \left(u_R-u_L\right), 
\\
\Pi_L^* &= \Pi^* + \frac{\mround}{2},  
\\
\Pi_R^* &= \Pi^* - \frac{\mround}{2},
\\
z_L^* &= z_L,  
\\
z_R^* &= z_R.
\end{aligned}
\right.
\label{eq: intermediate states}
\end{equation}
We now only need to determine $\mround$ such that conditions a), b) and c) are satisfied. It is straightforward to see that the integral consistency requirement of condition a) implies by \eqref{eq: integral consistency} that
\begin{equation}
\mround =
\left\{\frac{g}{\tau} \partial_m z\right\}
\frac{\Delta m_L + \Delta m_R}{2}
.
\end{equation}
A simple mean to comply with conditions a) and b) is to choose
\begin{equation} \label{newlabelM}
\mround = \frac{g}{\tau_\Delta(\W_L,\W_R)}(z_R - z_L),
\end{equation}
where $\tau_\Delta(\W_L,\W_R)$ has to be chosen such that 
$\tau_\Delta(\W_L,\W_R) \to \overline{\tau}$ if $\tau_L,\tau_R\to \overline{\tau}$.
At last, we need to ensure condition c): if we have \eqref{eq: lake at rest}, then $\W_L^* = \W_L$ and $\W_R = \W_R^*$ imply that
\begin{equation}
\frac{1}{\tau_\Delta(\W_L,\W_R)}
=
\frac{1}{2}
\left(
\frac{1}{\tau_L}
+
\frac{1}{\tau_R}
\right)
.
\label{eq: def of tau delta}
\end{equation}
As a conclusion, we choose to adopt \eqref{eq: def of tau delta} as a definition of $\tau_\Delta$ for any $\W_L$ and $\W_R$. This yields that
\begin{equation}
\left\{
\frac{g}{\tau}\partial_m z
\right\}
(\W_L, \W_R, \Delta m_L, \Delta m_R)
=
\frac{ g}{\tau_\Delta(\W_L,\W_R)}
\frac{2 }{\Delta m_L + \Delta m_R}
(z_R - z_L)
.
\label{eq: def mround}
\end{equation}
It is then straightforward to check that the approximate Riemann solver defined by \eqref{eq: intermediate states} and \eqref{eq: def mround} verifies the three conditions a), b) and c). We sump up in the following proposition the properties of our Riemann solver.

\begin{proposition}
Consider the approximate Riemann solver $\WRP$ defined by \eqref{eq: approx riemann solver structure}, \eqref{eq: intermediate states}, \eqref{newlabelM} and \eqref{eq: def of tau delta}.

(i) $\WRP$ is consistent in the integral sense with the Shallow-Water Equations~\eqref{stvenant1}.

(ii) In the case of a constant bottom $z_L = z_R$, $\WRP$ degenerates to a classic approximate Riemann solver for the barotropic Euler equations in Lagrange coordinates.

(iii)  If $\W_L$ and $\W_R$ verify the lake at rest relation~\eqref{eq: lake at rest}, then $\W_k^* = \W_k$, $k=L,R$. 

\end{proposition}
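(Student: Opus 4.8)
The three assertions are all obtained by direct substitution into the closed‑form expressions \eqref{eq: intermediate states}, combined with the choices \eqref{newlabelM} and \eqref{eq: def of tau delta}; there is no deep step, so the plan is to organize the bookkeeping, keep track of the order in which the free parameters get pinned down ($\mround$ fixed by \eqref{newlabelM} up to $\tau_\Delta$, the bracket $\{g\partial_mz/\tau\}$ then forced by integral consistency, and finally $\tau_\Delta$ forced by the lake‑at‑rest requirement), and isolate the one place where a genuine limit (rather than an algebraic identity) is needed.

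For (i), I would treat \eqref{eq: intermediate states} as the \emph{definition} of $\W_L^\ast,\W_R^\ast$ and insert it componentwise into the right‑hand side of \eqref{eq: integral consistency}. With $\G(\W)=(-u,\Pi,a^2u,0)^T$, the $\tau$-, $\Pi$- and $z$-components of $-a(\W_L^\ast-\W_L)+a(\W_R-\W_R^\ast)$ come out to be exactly $-(u_R-u_L)$, $a^2(u_R-u_L)$ and $0$, i.e. $\G(\W_R)-\G(\W_L)$ with no residual, while the $u$-component comes out to be $(\Pi_R-\Pi_L)+\mround$; hence \eqref{eq: integral consistency} holds precisely when $\mround=\tfrac{\Delta m_L+\Delta m_R}{2}\{g\partial_mz/\tau\}$, which I take as the definition of the bracket, giving \eqref{eq: def mround}. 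It then remains to verify the source‑consistency limit \eqref{eq: topo approx consistency}: expanding the one‑sided averages $z_L,z_R$ by Taylor's formula around $\om$ (for $z\in C^2$) gives $z_R-z_L=\tfrac{\Delta m_L+\Delta m_R}{2}\,(\partial_mz)(\om)+\OO(\Delta m_L^2+\Delta m_R^2)$, so $\tfrac{2}{\Delta m_L+\Delta m_R}(z_R-z_L)\to(\partial_mz)(\om)$, while $\tau_\Delta\to\overline\tau$ by \eqref{eq: def of tau delta} since $\tau_L,\tau_R\to\overline\tau$; multiplying, the bracket tends to $\tfrac{g}{\overline\tau}(\partial_mz)(\om)$. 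This limit is the only estimate in the proof; the only care needed is to check that the quadratic remainders in the averages are still $o(1)$ after division by $\Delta m_L+\Delta m_R$, which they are.

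For (ii), when $z_L=z_R$ formula \eqref{newlabelM} gives $\mround=0$, and \eqref{eq: intermediate states} collapses to $\tau_L^\ast=\tau_L+\tfrac{1}{2a}(u_R-u_L)-\tfrac{1}{2a^2}(\Pi_R-\Pi_L)$ and symmetrically for $\tau_R^\ast$, together with $u^\ast=\tfrac{u_L+u_R}{2}-\tfrac{1}{2a}(\Pi_R-\Pi_L)$ and $\Pi_L^\ast=\Pi_R^\ast=\Pi^\ast=\tfrac{\Pi_L+\Pi_R}{2}-\tfrac a2(u_R-u_L)$. I would then simply identify these with the intermediate states of the standard Suliciu/acoustic relaxation solver for barotropic Lagrangian gas dynamics recalled in appendix~\ref{appendix: eigenstructure acoustic relaxed}, which establishes (ii).

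For (iii), I would first prove the single key identity $(\Pi_R-\Pi_L)+\mround=0$ under the lake‑at‑rest conditions \eqref{eq: lake at rest}. Using $\Pi_k=\peos(\tau_k)=g/(2\tau_k^2)$, the lake‑at‑rest relation $1/\tau_L-1/\tau_R=z_R-z_L$, and the choice $1/\tau_\Delta=\tfrac12(1/\tau_L+1/\tau_R)$ of \eqref{eq: def of tau delta}, one gets $\mround=\tfrac g2\,(1/\tau_L^2-1/\tau_R^2)=-(\Pi_R-\Pi_L)$. Substituting this identity, together with $u_L=u_R=0$, back into \eqref{eq: intermediate states} makes every correction term vanish: $\tau_L^\ast=\tau_L$, $\tau_R^\ast=\tau_R$, $u^\ast=0$, $\Pi_L^\ast=\Pi_L$, $\Pi_R^\ast=\Pi_R$, and $z_k^\ast=z_k$ by definition, so $\W_k^\ast=\W_k$ for $k=L,R$. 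The only thing worth emphasizing is that this cancellation is exactly what the harmonic‑type definition \eqref{eq: def of tau delta} of $\tau_\Delta$ was engineered to produce, and that it is compatible with (i) and (ii), since (i) only constrains the bracket and (ii) only requires $\mround=0$ when $z_L=z_R$, both of which hold regardless of the value of $\tau_\Delta$.
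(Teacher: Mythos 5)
Your proposal is correct and follows essentially the same route as the paper, which obtains the proposition by construction: the intermediate states \eqref{eq: intermediate states} are built from the jump relations, $\mround$ is fixed by the integral consistency requirement, and \eqref{eq: def of tau delta} is exactly what condition c) forces, so your componentwise substitution, the identity $\mround=-(\Pi_R-\Pi_L)$ under \eqref{eq: lake at rest}, and the collapse to the classic relaxation solver when $z_L=z_R$ are the same computations run in the verification direction. Your only addition is to spell out the Taylor-expansion argument for the limit \eqref{eq: topo approx consistency}, which the paper leaves as ``straightforward to check,'' and that detail is handled correctly.
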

\makeatletter{}%
\section{Numerical method}

In this section, we now give the details of the two-step process proposed in Section \ref{atod} for solving the Shallow Water Equations. Let us briefly recall 
that this two-step process is defined by
\begin{enumerate}
\item Update $\U^{n}_{j}$ to $\U^{n+1-}_{j}$ by approximating the solution of \eqref{eq: acoustic original form},
\item Update $\U^{n+1-}_{j}$ to $\U^{n+1}_{j}$ by approximating the solution of \eqref{eq: transport}.
\end{enumerate}
In the sequel we shall note 
$\Delta m_j = \Delta x_j h_j^n$, 
$\Delta m_{j+1/2} = (\Delta m_j + \Delta m_{j+1}) /2$, and if we assume as given 
the approximate solution $\{\U^n_j\}_j$ at time $t^n$, we introduce the approximate solution $\{\W^n_j\}_j$ at equilibrium in the $\W$ variable with a clear and natural definition.   
We begin with a fully explicit discretization of the Shallow Water Equations, which means that both steps of the process are solved with a time-explicit procedure, and we will go on with a mixed implicit-explicit strategy for which the solutions of \eqref{eq: acoustic original form} are solved implicitly in time and the solutions of \eqref{eq: transport} are solved explicitly. The latter strategy allows to get rid of the strong CFL restriction coming from the acoustic waves in the subsonic regime and corresponds to the very motivation of the present study.    
%
%
%
%
%
%
%
%
%
%

%
\makeatletter{}%
\subsection{Time-explicit discretization}

Let us begin with the time-explicit discretization of the acoustic system
\eqref{eq: acoustic original form}, or equivalently (\ref{eq: acoustic system tau}). \\
\ \\
\noindent\textbf{Acoustic step.} The acoustic update is achieved thanks to the proposed relaxation approximation and the corresponding approximate Riemann solver detailed in Section~\ref{section: approximate riemann solver}. More precisely, we propose to simply use a Godunov-type method based on this approximate Riemann solver. As it is customary and starting from the piecewise constant 
initial data defined by the sequence $\{\W^n_j\}_j$, it consists in averaging after a $\Delta t$-long time evolution, the juxtaposition of the approximate Riemann solutions defined locally at each interface $x_{j+1/2}$. Following the same lines   
as in \cite{ccmgsk2013} and \cite{ccmgsk-cicp}, see also \cite{16}, \cite{17}, \cite{CCGRS:2010} %
and the references therein, this update procedure can be easily expressed as follows after simple calculations,
\begin{subequations}
\begin{empheq}[left=\empheqlbrace]{align}
\tau^{n+1-}_{j} 
&=
 \tau^{n}_{j} + \frac{\Delta t}{\Delta m_j}
 (u_{j+1/2}^* - u_{j+1/2}^*)
 ,\\
u^{n+1-}_{j} 
&=
 u^{n}_{j} - \frac{\Delta t}{\Delta m_j}
 (\Pi_{j+1/2}^* - \Pi_{j+1/2}^*)
 -\Delta t \left\{\frac{g}{\tau}\partial_m z\right\}_j^n
 ,\\
\Pi^{n+1-}_{j} 
&=
 \Pi^{n}_{j} - \frac{\Delta t}{\Delta m_j}
 a^2(u_{j+1/2}^* - u_{j+1/2}^*)
 .
\end{empheq}
\label{eq: update acoustic explicit scheme W}
\end{subequations}

where $\Pi^n_j=p^{\mbox{EOS}}(\tau_j^n)$ and
\begin{subequations}
\label{eq: explicit acoustic numerical flux}
\begin{empheq}[left=\empheqlbrace]{align}
u_{j+1/2}^*
&=
\frac{1}{2}(u_{j}^{n} + u_{j+1}^{n})
- 
\frac{1}{2a}(\Pi_{j+1}^{n} - \Pi_{j}^{n})
-
\frac{\Delta m_{j+1/2}}{2a}
\left\{\frac{g}{\tau}\partial_m z\right\}_{j+1/2}^n,
\\
\Pi_{j+1/2}^*
&=
\frac{1}{2}(\Pi_{j}^{n} + \Pi_{j+1}^{n})
- 
\frac{a}{2}(u_{j+1}^{n} - u_{j}^{n}),
\\
\label{eq: explicit acoustic numerical flux 3}
\left\{
\frac{g}{\tau}\partial_m z
\right\}_{j+1/2}^n
&=
\left\{
\frac{g}{\tau}\partial_m z
\right\} (\W_{j}^{n}, \W_{j}^{n}, \Delta m_{j}, \Delta m_{j+1})
=
\frac{g}{2}
\left(
\frac{1}{\tau^{n}_{j}} + \frac{1}{\tau_{j+1}^{n}}
\right)
\frac{z_{j+1} - z_{j}}{\Delta m_{j+1/2}},
\\
\label{eq: explicit acoustic numerical flux 4}
\left\{
\frac{g}{\tau}\partial_m z
\right\}_{j}^n
&=
\frac{1}{2}
\left(
\frac{\Delta m_{j+1/2}}{\Delta m_j}
\left\{
\frac{g}{\tau}\partial_m z
\right\}_{j+1/2}^n
+
\frac{\Delta m_{j-1/2}}{\Delta m_j}
\left\{
\frac{g}{\tau}\partial_m z
\right\}_{j-1/2}^n
\right)
.
\end{empheq}
\end{subequations}
If we focus now on the conservative variable $\U=(h,hu)$, the discretization~\eqref{eq: update acoustic explicit scheme W} yields
the following formula for the update sequence $\{\U^{n+1-}_j\}_j$, namely
\begin{subequations}
\begin{empheq}[left=\empheqlbrace]{align}
L_j h^{n+1-} &= h^{n}_{j},
\label{eq: acoustic explicit h update}
\\
L_j (hu)^{n+1-} &= (hu)^{n}_{j} 
- \frac{\Delta t}{\Delta x_j} (\Pi^*_{j+1/2} - \Pi^*_{j-1/2})
-
\Delta t\,
h_{j}^{n}
\left\{
\frac{g}{\tau}\partial_m z
\right\}_{j}^n,
\\
L_j &= 1+\frac{\Delta t}{\Delta x_j}(u_{j+1/2}^* - u_{j-1/2}^*)
.
\label{eq: def of L_i explicit scheme}
\end{empheq}
\label{eq: acoustic update cons variable explicit}
\end{subequations}
Let us now continue with the discretization of the transport equations \eqref{eq: transport}. \\

\noindent\textbf{Transport step.} Denoting $\varphi \in \{h,hu\}$ and following again the same lines of \cite{ccmgsk2013} and \cite{ccmgsk-cicp}, see again also \cite{quote6}, we use a standard time-explicit upwind discretization for the transport step by setting
\begin{equation}
\varphi_{j}^{n+1}
=
\varphi_{j}^{n+1-}
-
\frac{\Delta t}{\Delta x_j}
(
 u_{j+1/2}^* \varphi_{j+1/2}^{n+1-} 
 -
 u_{j-1/2}^* \varphi_{j-1/2}^{n+1-}
)
+
\frac{\Delta t}{\Delta x_j}
\varphi_{j}^{n+1-} 
(
u_{j+1/2}^*
 -
u_{j-1/2}^*
)
,
\label{eq: transport scheme}
\end{equation}
where
\begin{equation*}
\varphi^{n+1-}_{j+1/2} = 
\begin{cases}
\varphi^{n+1-}_{j} ,\text{ if $u^*_{j+1/2} \geq 0$},
\\
\varphi^{n+1-}_{j+1},\text{ if $u^*_{j+1/2} < 0$}.
\end{cases}
\end{equation*}
Let us note that the transport update~\eqref{eq: transport scheme} equivalently reads
\begin{equation}
\varphi^{n+1}_j =  
\varphi^{n+1-}_j L_j
+ \frac{\Delta t}{\Delta x_j}\left( u^*_{j+1/2} \varphi^{n+1-}_{j+1/2} - u^*_{j-1/2} \varphi^{n+1-}_{j-1/2}\right),
 \label{eq: transport scheme L_j}
\end{equation}
and that the interface value of the velocity $u^*_{j+1/2}$ coincides with the one proposed in the first step, which is actually crucial in order for the whole scheme to be conservative. The next statement gather the main properties satisfied by our explicit in time and two-step algorithm. 
\\

\noindent\textbf{Overall Discretization.} 
After injecting \eqref{eq: def of L_i explicit scheme} into \eqref{eq: transport scheme L_j} one obtains the complete update procedure from $t^{n}$ to $t^{n+1}$. For the conservative variables it reads
\begin{equation} \label{eq: overal scheme explicit}
\left\{
\begin{array}{l}
h^{n+1}_{j} 
= 
h^{n}_{j}
+ \frac{\Delta t}{\Delta x_j }
\left(
 u^*_{j+1/2} h^{n+1-}_{j+1/2} - u^*_{j-1/2} h^{n+1-}_{j-1/2}
\right)
,
\\
(hu)^{n+1}_{j} 
\!= 
\!
(hu)^{n}_{j}
\!\!
+ 
\!
\frac{\Delta t}{\Delta x_j }
\!\!
\left[
 u^*_{j+1/2} (hu)^{n+1-}_{j+1/2}
 +\Pi^*_{j+1/2}
 - u^*_{j-1/2} (hu)^{n+1-}_{j-1/2}
 - \Pi^*_{j-1/2}
\right]
\!\!
+\Delta t h_j^n
\left\{
\!
\frac{g}{\tau}\partial_m z
\!
\right\}^{n}_{j}
.
\end{array}
\right.
\end{equation}

\noindent We sum up the properties of our explicit scheme \eqref{eq: update acoustic explicit scheme W}-\eqref{eq: transport scheme} in the following proposition.
\begin{proposition}
\label{GIRpart3thrm1}
The fully explicit scheme \eqref{eq: update acoustic explicit scheme W}-\eqref{eq: transport scheme} satisfies the following:

{\it (i)} it is a conservative scheme for the water height $h$. It is also a conservative scheme for $h u$ when the topography source term vanishes.

Under the Whitham subcharacteristic condition and the 
Courant-Friedrichs-Lewy (CFL) conditions, 
\begin{equation}
\max_j \frac{\Delta t}{h_{j}^{n}\Delta x_j} \leq \frac{1}{2a} \quad \mbox{and} \quad  
\max_j \frac{\Delta t}{\Delta x_j} 
\Big( (u_{j-\frac{1}{2}}^{*})^{+} - (u_{j+\frac{1}{2}}^{*})^{-} \Big) < 1
,
\label{eq: CFL explicit}
\end{equation}

{\it (ii)} the water height $h^n_j$ is positive for all $j$ and $n>0$ provided that $h^0_j$ is positive for all $j$,

{\it (iii)} it is well-balanced, with respect to the lake at rest condition~\eqref{genLaR},

{\it (iv)} it degenerates to the classic Lagrange-Projection scheme when the bottom is flat.
\end{proposition}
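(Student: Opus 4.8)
The plan is to treat the four items separately, using as the main workhorse the closed-form update \eqref{eq: overal scheme explicit} for the conservative variables, the sub-step formulas \eqref{eq: update acoustic explicit scheme W}--\eqref{eq: transport scheme L_j}, and the three properties of the approximate Riemann solver $\WRP$ established just above. For \emph{(i)}, I would read off directly from \eqref{eq: overal scheme explicit} that the $h$--update is $h^{n+1}_j=h^n_j-\frac{\Delta t}{\Delta x_j}\big(\mathcal G^h_{j+1/2}-\mathcal G^h_{j-1/2}\big)$ with the single-valued interface flux $\mathcal G^h_{j+1/2}:=-u^*_{j+1/2}h^{n+1-}_{j+1/2}$, where both $u^*_{j+1/2}$ (from \eqref{eq: explicit acoustic numerical flux}) and the upwind trace $h^{n+1-}_{j+1/2}$ are computed once per interface; hence $h$ is conserved. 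For $hu$ with flat bottom, the topography coefficients \eqref{eq: explicit acoustic numerical flux 3}--\eqref{eq: explicit acoustic numerical flux 4} vanish and \eqref{eq: overal scheme explicit} takes conservation form with interface flux $-u^*_{j+1/2}(hu)^{n+1-}_{j+1/2}-\Pi^*_{j+1/2}$; here the point to stress is that $u^*_{j+1/2}$ and $\Pi^*_{j+1/2}$ are shared by the two adjacent cells. With non-flat topography the residual term $\Delta t\,h^n_j\{g/\tau\,\partial_m z\}^n_j$ is not a difference of interface quantities, so conservativity of momentum genuinely fails, in agreement with the statement.

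For \emph{(ii)} I would split the time step. In the acoustic step, \eqref{eq: update acoustic explicit scheme W} gives $\tau^{n+1-}_j=\tau^n_j+\frac{\Delta t}{\Delta m_j}(u^*_{j+1/2}-u^*_{j-1/2})$; since $\tau^n_j\Delta m_j=\Delta x_j$ this yields $\tau^{n+1-}_j/\tau^n_j=L_j$, and with \eqref{eq: acoustic explicit h update} then $h^{n+1-}_j=1/\tau^{n+1-}_j$, so positivity of $h^{n+1-}_j$ is equivalent to $\tau^{n+1-}_j>0$. Reinterpreting the acoustic step as the Godunov cell average (in the mass variable) of the juxtaposed Riemann fans, the no-interaction bound $a\Delta t\le \Delta m_j/2$ — which is exactly the first CFL condition in \eqref{eq: CFL explicit} — gives
\[
\tau^{n+1-}_j=\tfrac1{\Delta m_j}\big(a\Delta t\,\tau^{*}_{R,\,j-1/2}+(\Delta m_j-2a\Delta t)\,\tau^n_j+a\Delta t\,\tau^{*}_{L,\,j+1/2}\big),
\]
a convex combination of $\tau^n_j>0$ and the intermediate specific volumes of \eqref{eq: intermediate states}; so it remains to check $\tau^*_L,\tau^*_R>0$, which is where the Whitham condition \eqref{eq: whitham condition} enters (controlling $\Pi_R-\Pi_L$ and $\mround$, using $a^2>(c/\tau)^2=-(\peos)'(\tau)$). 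In the transport step, rewriting \eqref{eq: transport scheme L_j} for $\varphi=h$ with $L_j$ from \eqref{eq: def of L_i explicit scheme} and resolving the upwinding, I would obtain the convex combination
\[
h^{n+1}_j=\Big(1-\tfrac{\Delta t}{\Delta x_j}\big((u^*_{j-1/2})^{+}-(u^*_{j+1/2})^{-}\big)\Big)h^{n+1-}_j+\tfrac{\Delta t}{\Delta x_j}(u^*_{j-1/2})^{+}h^{n+1-}_{j-1}-\tfrac{\Delta t}{\Delta x_j}(u^*_{j+1/2})^{-}h^{n+1-}_{j+1},
\]
whose coefficients are all nonnegative under the second CFL condition in \eqref{eq: CFL explicit}; hence $h^{n+1}_j>0$ whenever $h^{n+1-}_j>0$ for all $j$. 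An induction on $n$ then closes \emph{(ii)}.

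For \emph{(iii)} I would assume the discrete data satisfy the lake-at-rest relation \eqref{genLaR}, i.e. $u^n_j=0$ and $1/\tau^n_j+z_j$ independent of $j$; this is precisely \eqref{eq: lake at rest} at every interface, so property (iii) of the Riemann solver gives $\W^*_k=\W_k$ at each interface, hence $u^*_{j+1/2}=0$ and $\Pi^*_{j+1/2}=\tfrac12(\Pi^n_j+\Pi^n_{j+1})$ with $\Pi^n_j=\peos(\tau^n_j)=\tfrac g2(h^n_j)^2$. Then $L_j=1$ so $h^{n+1-}_j=h^n_j$, and a short computation using $z_{j+1}-z_j=h^n_j-h^n_{j+1}$ shows that in \eqref{eq: acoustic update cons variable explicit} the pressure-flux difference $\frac{\Delta t}{\Delta x_j}(\Pi^*_{j+1/2}-\Pi^*_{j-1/2})$ is exactly cancelled by $\Delta t\,h^n_j\{g/\tau\,\partial_m z\}^n_j$, so $(hu)^{n+1-}_j=0$; finally the transport step \eqref{eq: transport scheme} does nothing since $u^*\equiv0$, whence $\U^{n+1}_j=\U^n_j$. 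For \emph{(iv)}, if $z$ is constant then $z_{j+1}-z_j=0$ makes \eqref{eq: explicit acoustic numerical flux 3}--\eqref{eq: explicit acoustic numerical flux 4} vanish, so $\mround\equiv0$ and, by property (ii) of the Riemann solver, \eqref{eq: update acoustic explicit scheme W} reduces to the standard Suliciu acoustic step for the barotropic Lagrangian system while the transport step is untouched — i.e. the classical Lagrange--Projection scheme of \cite{ccmgsk2013}.

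The only genuinely delicate point I anticipate is the positivity of the intermediate states $\tau^*_L,\tau^*_R$ needed in \emph{(ii)}: because of the non-conservative coupling the defect $\mround$ enters additively in those quantities (see \eqref{eq: intermediate states}), so one has to verify that the Whitham bound \eqref{eq: whitham condition} — possibly supplemented by a mild lower bound on $a$ accounting for the topography increments $z_R-z_L$ — keeps $\tau^*_L,\tau^*_R$ positive over the range of $\tau$ swept during the step. Everything else (the conservativity bookkeeping, the two convex-combination identities, the lake-at-rest cancellation, and the flat-bottom limit) is routine manipulation of the explicit formulas already at hand.
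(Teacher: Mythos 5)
Your proposal is correct and follows essentially the same route as the paper's own proof: conservation read off from \eqref{eq: overal scheme explicit}, positivity via the acoustic update (first CFL condition, no-interaction/Godunov averaging through $L_j$) followed by the nonnegative-coefficient combination in the transport step under the second CFL condition, well-balancedness from condition c) of the approximate Riemann solver with the transport step becoming trivial, and the flat-bottom degeneracy from condition b). You are in fact somewhat more explicit than the paper — notably in writing out the lake-at-rest cancellation between $\Pi^*_{j\pm1/2}$ and the discrete topography term, and in flagging that positivity of the intermediate states $\tau^*_{L},\tau^*_{R}$ is what the Whitham condition (with $a$ taken large enough) must guarantee, a point the paper's terse proof leaves implicit.
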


\begin{proof}\rule{0pt}{0pt}

(i) This is a straightforward consequence of \eqref{eq: overal scheme explicit}.

(ii) Thanks to \eqref{eq: def of L_i explicit scheme} and \eqref{eq: acoustic explicit h update}, the CFL condition~\eqref{eq: CFL explicit} ensures that $h^{n+1-}_j>0$ for $j\in\nbZ$. The CFL condition~\eqref{eq: CFL explicit} yields that $h^{n+1}_{j}$ is a convex combination of $(h^{n+1-}_{k})_{k=j\pm 1,j}$ and therefore $h^{n+1}_j >0$.

(iii) Consider a discrete fluid state at instant $t^n$ that matches the lake at rest condition, namely: 
$u_{j}^{n}$ = 0, $h_{j}^{n} + z_{j} = h_{j+1}^{n} + z_{j+1}$, for all $j\in\nbZ$. Thanks to the condition c) verified by the approximated Riemann solver of the acoustic step, we know that 
$u_{j}^{n+1-} = 0$, $h_{j}^{n+1-} + z_{j} = h_{j+1}^{n+1-} + z_{j+1}$, for all $j\in\nbZ$. And thus, the transport step~\eqref{eq: transport scheme} boils down to $h_{j}^{n+1}= h_{j}^{n+1-}$ and  $u_{j}^{n+1} = 0$.

(iv) This is consequence of condition b) imposed on the approximate Riemann solver for the acoustic step.
\end{proof}

\noindent {\it Remark.} 
Following the theory proposed by Gallice in \cite{16} and \cite{17} for non conservative systems with source terms, it is also possible to prove that our
time-explicit Godunov-type scheme based on the definition of a consistent approximate Riemann solver satisfies a discrete version 
of the non conservative entropy (\ref{entropyIneq}) under additional assumptions on the intermediate states and the propagation speed $a$. We refer for instance the reader to \cite{acu} and \cite{cb} for detailed calculations. Note that we are not able to prove at present that the scheme satisfies a discrete version of the conservative entropy inequality (\ref{consEntropyIneq}). 
%
%
%
%
%
%
%
%
%
%
%
%
%
%
%
%
%
%
%
%
%
%
%
%
%
%
%
%
%
%
%
%
%
%
%
%
%
%
%
%
%
%
%
%
%
%
%
%
%
%
%
%
%
%
%
%
%
%
%
%
%
%
%
%
%
%
%
%
%
%
%
%

%
\makeatletter{}%
\subsection{Implicit in time Lagrange-Projection method}
Let us now consider the ultimate algorithm of this paper, which consists in considering a time-implicit scheme for the Lagrangian step and keeping unchanged the transport step. As we will see in the next theorem, this strategy will allow us to obtain a non linearly stable algorithm under a CFL restriction based on the material velocity $u$ and not on the sound velocity $c$. In order to derive a time-implicit scheme for the Lagrangian step, we follow the following standard approach where the numerical fluxes are now evaluated at time $t^{n+1-}$, which gives here
the same update formulas as in the explicit case which are
\begin{subequations}
\label{eq: update acoustic implicit scheme W}
\begin{empheq}[left=\empheqlbrace]{align}
\tau^{n+1-}_{j} 
&=
 \tau^{n}_{j} + \frac{\Delta t}{\Delta m_j}
 (u_{j+1/2}^* - u_{j+1/2}^*)
 ,\\
u^{n+1-}_{j} 
&=
 u^{n}_{j} - \frac{\Delta t}{\Delta m_j}
 (\Pi_{j+1/2}^* - \Pi_{j+1/2}^*)
 -\Delta t \left\{\frac{g}{\tau}\partial_m z\right\}_j^n
 ,\\
\Pi^{n+1-}_{j} 
&=
 \Pi^{n}_{j} - \frac{\Delta t}{\Delta m_j}
 a^2(u_{j+1/2}^* - u_{j+1/2}^*)
 ,
\end{empheq}
\end{subequations}
but where the numerical fluxes now involve quantities at time $t^{n+1-}$ apart from the term consistent with $\left\{\frac{g}{\tau}\partial_m z\right\}$, which writes
\begin{subequations}
\label{eq: implicit acoustic numerical flux}
\begin{empheq}[left=\empheqlbrace]{align}
u_{j+1/2}^*
&=
\frac{1}{2}(u_{j}^{n+1-} + u_{j+1}^{n+1-})
- 
\frac{1}{2a}(\Pi_{j+1}^{n+1-} - \Pi_{j}^{n+1-})
-
\frac{\Delta m_{j+1/2}}{2a}
\left\{\frac{g}{\tau}\partial_m z\right\}_{j+1/2}^n
,
\\
\Pi_{j+1/2}^*
&=
\frac{1}{2}(\Pi_{j}^{n+1-} + \Pi_{j+1}^{n+1-})
- 
\frac{a}{2}(u_{j+1}^{n+1-} - u_{j}^{n+1-})
,
\end{empheq}
\end{subequations}
with $\left\{\frac{g}{\tau}\partial_m z\right\}_j^n$ and $\left\{\frac{g}{\tau}\partial_m z\right\}_{j+1/2}^n$ given by~\eqref{eq: explicit acoustic numerical flux 3} and~\eqref{eq: explicit acoustic numerical flux 4}.

Let us observe that we suggest here to keep on evaluating the topography source term at time $t^n$. This choice is motivated by the fact that this implicit system to be solved turns out to be a linear system with a significantly reduced coupling of the variables. More precisely, it is interesting to see that it is equivalent to the following one written in characteristic variables, namely
\begin{equation} \label{eq: implicit acoustic characteristic}
\left\{
\begin{aligned}
\tau_j^{n+1-} &= \tau_j^n + \frac{\Delta t }{\Delta m_j} \left( u_{j+1/2}^{*} - u_{j-1/2}^{*}\right), \\
\overrightarrow{w}_j^{n+1-} &= \overrightarrow{w}_j^n - a \frac{\Delta t }{\Delta m_j} \left( \overrightarrow{w}_j^{n+1-} - \overrightarrow{w}_{j-1}^{n+1-} \right) - a \Delta t \frac{\Delta m_{j-1/2}}{\Delta m_j} \left\{\frac{g}{\tau}\partial_m z\right\}_{j-1/2}^n,  \\
\overleftarrow{w}_j^{n+1-} &= \overleftarrow{w}_j^n + a \frac{\Delta t }{\Delta m_j} \left( \overleftarrow{w}_{j+1}^{n+1-} - \overleftarrow{w}_j^{n+1-} \right) + a \Delta t \frac{\Delta m_{j+1/2}}{\Delta m_j} \left\{\frac{g}{\tau}\partial_m z\right\}_{j+1/2}^n, \\
z^{n+1}_j &= z^n_j,
\end{aligned}
\right.
\end{equation}
where of course $u^*_{j+1/2}$ means here $u^{*,n+1-}_{j+1/2}$ (the notation has been lightened for the sake of clarity).
Notice that the coupling between the four variables is actually weak in (\ref{eq: implicit acoustic characteristic}) since we can easily first solve the linear system given by the second and the third equations, which are nothing but
\begin{equation*}
\left\{
\begin{aligned}
\left(I_N + a\Delta t \, A_+^n \right) \overrightarrow{w}^{n+1-} &= \overrightarrow{w}^n - a \Delta t \, b_+^n, \\
\left(I_N - a\Delta t \, A_-^n \right) \overleftarrow{w}^{n+1-} &= \overleftarrow{w}^n + a \Delta t \, b_-^n ,
\end{aligned}
\right.
\end{equation*}  
where we have set
\begin{align*}
A_+^n &= \begin{pmatrix}
\frac{1}{\Delta m_1} & 0 & \cdots & 0 \\
\frac{-1}{\Delta m_2} & \frac{1}{\Delta m_2} & \ddots & \vdots \\
0 & \ddots & \ddots & 0 \\
0 & 0 & \frac{-1}{\Delta m_N} & \frac{1}{\Delta m_N} 
\end{pmatrix},
&
b_+^n &= g \cdot \begin{pmatrix}
\frac{1}{\Delta m_1} \frac{h_1^n+h_0^n}{2} \left(z_1-z_0\right) \\ 
\frac{1}{\Delta m_2} \frac{h_2^n+h_1^n}{2} \left(z_2-z_1\right) \\ 
\vdots \\ 
\frac{1}{\Delta m_N} \frac{h_N^n+h_{N-1}^n}{2} \left(z_N-z_{N-1}\right)
\end{pmatrix},\\
\intertext{and}
A_-^n &= \begin{pmatrix}
\frac{-1}{\Delta m_1} & \frac{1}{\Delta m_1} & 0 & 0 \\
0 & \ddots & \ddots & 0 \\
\vdots & \ddots & \frac{-1}{\Delta m_{N-1}} & \frac{1}{\Delta m_{N-1}} \\
0 & \vdots & 0 & \frac{-1}{\Delta m_N} 
\end{pmatrix},
& 
b_-^n &= g \cdot \begin{pmatrix}
\frac{1}{\Delta m_1} \frac{h_2^n+h_1^n}{2} \left(z_2-z_1\right) \\ 
\vdots \\ 
\frac{1}{\Delta m_{N-1}} \frac{h_N^n+h_{N-1}^n}{2} \left(z_N-z_{N-1}\right) \\ 
\frac{1}{\Delta m_N} \frac{h_{N+1}^n+h_N^n}{2} \left(z_{N+1}-z_N\right)
\end{pmatrix}.
\end{align*}
Let us of course notice that a few coefficients of the matrices $A_+^n$ and $A_-^n$, and vectors $b_+^n$ and $b_-^n$ might be modified depending on the boundary conditions, but the purpose is to highlight that the characteristic variables $\overleftarrow{w}$ and $\overrightarrow{w}$ can be solved independently. Once this is done, the $\tau$ variable can be updated explicitly since $u^*_{j+1/2}$, or let us say $u^{*,n+1-}_{j+1/2}$, is explicitly known 
from the knowledge of $\overleftarrow{w}^{n+1-}$ and $\overrightarrow{w}^{n+1-}$ 
by the formulas 
$$
u^{n+1-}_{j} = \frac{1}{2a}(\overrightarrow{w}^{n+1-}_j - 
\overleftarrow{w}^{n+1-}_j), \quad
\Pi^{n+1-}_{j} = \frac{1}{2}(\overrightarrow{w}^{n+1-}_j + 
\overleftarrow{w}^{n+1-}_j).
$$
At last, notice that the matrices $\displaystyle \left(I_N + a\Delta t \, A_+^n \right)$ and $\displaystyle \left(I_N - a\Delta t \, A_-^n \right)$ are clealry triangular with positive diagonal coefficients, so that 
the system \eqref{eq: implicit acoustic characteristic} has a unique solution whatever the time step $\Delta t>0$ is.
It is quite natural at this stage to wonder whether the proposed time-implicit 
treatment of the Lagrangian step is well-balanced, which was true for 
the time-explicit version and was the key property leading to the well-balanced property of the global Explicit-Explicit Lagrange-Projection scheme in the previous section. It is the purpose of the next lemma.

\begin{lemma}
Under the assumption of the lake at rest at the initial time, i.e. :
\begin{equation*}
\forall j\in \left\{1,\dots,N\right\}, \ 
\left\{
\begin{aligned}
u_j^0 &= 0, \\
h_j^0 + z_j^0 &= \text{constant},
\end{aligned}
\right.
\end{equation*}
the implicit scheme for the Lagrangian step keeps this initial state unchanged, which means that the time-implicit Lagrangian step as well as the global Implicit-Explicit Lagrange-Projection scheme is still well-balanced. 
\end{lemma}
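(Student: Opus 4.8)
The plan is to exploit the fact, established just above the lemma, that the time-implicit linear system \eqref{eq: implicit acoustic characteristic} --- equivalently, the two triangular subsystems for $\overrightarrow{w}^{n+1-}$ and $\overleftarrow{w}^{n+1-}$ --- has a \emph{unique} solution for any $\Delta t>0$. Hence it suffices to exhibit a solution and check it: I claim that the lake-at-rest state is stationary under the implicit Lagrangian step, namely $\tau_j^{n+1-}=\tau_j^n$, $u_j^{n+1-}=0$, $\Pi_j^{n+1-}=\Pi_j^n=\peos(\tau_j^n)$ for all $j$. In the characteristic variables this reads $\overrightarrow{w}_j^{n+1-}=\overrightarrow{w}_j^n=\Pi_j^n$ and $\overleftarrow{w}_j^{n+1-}=\overleftarrow{w}_j^n=\Pi_j^n$, since $u_j^n=0$. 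I would carry out the verification in these variables, since that is where unique solvability was proven.

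The one computational ingredient is the discrete identity, valid under the lake-at-rest hypothesis, that at every interface
\begin{equation*}
\left\{\frac{g}{\tau}\partial_m z\right\}_{j+1/2}^n = -\,\frac{\Pi_{j+1}^n-\Pi_j^n}{\Delta m_{j+1/2}}.
\end{equation*}
This follows by combining three facts: the equation of state $\peos(\tau)=g/(2\tau^2)$, so that $\Pi_{j+1}^n-\Pi_j^n=\tfrac{g}{2}\big(1/(\tau_{j+1}^n)^2-1/(\tau_j^n)^2\big)$; the definition \eqref{eq: explicit acoustic numerical flux 3} of the interface source term together with the average \eqref{eq: def of tau delta} defining $\tau_\Delta$; and the lake-at-rest relation $h_j^n+z_j=h_{j+1}^n+z_{j+1}$, i.e. $z_{j+1}-z_j=1/\tau_j^n-1/\tau_{j+1}^n$. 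This is nothing but condition c) of the approximate Riemann solver re-read at the semi-discrete-in-time level, and it is the only place where the careful design of the source-term discretization is used. Once it is in hand, I substitute $\overrightarrow{w}_j^{n+1-}=\overleftarrow{w}_j^{n+1-}=\Pi_j^n$ into the second and third equations of \eqref{eq: implicit acoustic characteristic}: the discrete transport term $-\tfrac{a\Delta t}{\Delta m_j}(\Pi_j^n-\Pi_{j-1}^n)$ (resp. its mirror image) cancels exactly against the $j\mp1/2$ source contribution by the identity above, so the right-hand side collapses to $\Pi_j^n$ and the guess is indeed a solution; by uniqueness it is \emph{the} solution. Feeding this back into \eqref{eq: implicit acoustic numerical flux} gives $u_{j+1/2}^*=0$ at every interface and $\Pi_{j+1/2}^*=\tfrac12(\Pi_j^n+\Pi_{j+1}^n)$, whence the first and the $\Pi$-equations of \eqref{eq: update acoustic implicit scheme W} return $\tau_j^{n+1-}=\tau_j^n$ and $\Pi_j^{n+1-}=\Pi_j^n$, and the $u$-equation returns $u_j^{n+1-}=0$ (using once more the averaging \eqref{eq: explicit acoustic numerical flux 4}, which telescopes against $\Pi_{j+1/2}^*-\Pi_{j-1/2}^*$).

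It then remains to pass through the transport step, which is unchanged from the explicit scheme. Since $u_{j+1/2}^*=0$ for all $j$, the update \eqref{eq: transport scheme} reduces to $\varphi_j^{n+1}=\varphi_j^{n+1-}$ for $\varphi\in\{h,hu\}$, so $h_j^{n+1}=h_j^n$ and $(hu)_j^{n+1}=0$, i.e. $u_j^{n+1}=0$ and $h_j^{n+1}+z_j=\text{constant}$; this is exactly the argument of Proposition~\ref{GIRpart3thrm1}(iii). Iterating over $n$ shows that the discrete lake at rest is preserved for all times by the global Implicit-Explicit Lagrange-Projection scheme.

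The only step that is not pure bookkeeping is the discrete identity of the second paragraph; everything else is substitution. The conceptual point, and what makes the implicit case no harder than the explicit one, is that we never invert the implicit system: the a priori unique solvability of \eqref{eq: implicit acoustic characteristic} lets us content ourselves with checking that the stationary lake-at-rest state solves it.
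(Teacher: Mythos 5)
Your proposal is correct and follows essentially the same route as the paper: the heart of both arguments is the discrete identity $g\,\tfrac{h_{j+1}^n+h_j^n}{2}(z_{j+1}-z_j)=-(\Pi_{j+1}^n-\Pi_j^n)$ under the lake-at-rest and equilibrium-pressure initialisation, after which the characteristic variables $\overrightarrow{w},\overleftarrow{w}$ are seen to be unchanged by the implicit Lagrangian step (the paper checks this by showing $b_+^0=-A_+^0\Pi^0$ and cancelling the triangular matrices, you by verifying the stationary guess and invoking the unique solvability already established — the same mechanism), and then $u^*_{j+1/2}=0$ makes both the $\tau$-update and the transport step trivial. No gap; the differences are purely presentational.
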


\begin{proof}\rule{0pt}{0pt}
Under the assumption of the lake at rest, and thanks to the initialisation of the relaxation pressure, namely $\forall j\in \left\{1,\dots,N\right\}$, $\displaystyle \Pi_j^0 = \frac{g}{2} \left(h_j^0\right)^2$, we get
\[
g \frac{h_j^0+h_{j-1}^0}{2}(z_j-z_{j-1}) = - \frac{g}{2}\left((h_j^0)^2 - (h_{j-1}^0)^2\right) = - (\Pi_j^0-\Pi_{j-1}^0).
\]
Thus one can write 
\[
b_+^0 = - A_+^0 \Pi^0,
\]
and, in this special case where $u^0 \equiv 0$,
\[
\left(I_N + a\Delta t \, A_+^n \right) \left(\Pi+au\right)^{1-} = \overrightarrow{w}^0 - a \Delta t \, b_+^0 = \left(\Pi+au\right)^0 + a\Delta t \, A_+^0 \Pi^0 = \left(I_N + a\Delta t \, A_+^n \right) \left(\Pi+au\right)^{0},
\]
which finally yields to
\[
 \overrightarrow{w}^{1-}=\left(\Pi+au\right)^{1-} = \left(\Pi+au\right)^{0}=\overrightarrow{w}^0.
\] 
Similarly one can prove that 
\[
\overrightarrow{w}^{1-} = \left(\Pi-au\right)^{1-} = \left(\Pi-au\right)^{0}=\overrightarrow{w}^0,
\]
so that 
\[
\left\{
\begin{aligned}
u_j^{1-} &=\frac{\overrightarrow{w}_j^{1-}-\overleftarrow{w}_j^{1-}}{2a}=\frac{\overrightarrow{w}_j^{0}-\overleftarrow{w}_j^{0}}{2a}=u_j^0=0, \\
\Pi_j^{1-} &=\frac{\overrightarrow{w}_j^{1-}+\overleftarrow{w}_j^{1-}}{2}=\frac{\overrightarrow{w}_j^{0}+\overleftarrow{w}_j^{0}}{2}=\Pi_j^0=\frac{g}{2}(h_j^0)^2,
\end{aligned}
\right.
\] 
and 
\[
u_{j+1/2}^{*,1-} = \frac{1}{2}(u_{j+1}^{1-}-u_j^{1-}) - \frac{1}{2a}(\Pi_{j+1}^{1-}-\Pi_j^{1-}) - \frac{g}{2a} \frac{h_{j+1}^0+h_j^0}{2}(z_{j+1}-z_j) = 0,
\]
for all $j$, and then 
\[
\tau_j^{1-} = \tau_j^0+\frac{\Delta t }{\Delta m_j} \left( u_{j+1/2}^{*,1-} - u_{j-1/2}^{*,1-}\right)=\tau_j^0,
\]
for all $j$. Finally we get that the lake is also at rest at the end of the Lagrangian step and, since the transport step is trivial because $u^{1^-} \equiv 0$, the global implicit-explicit scheme is well-balanced.
\end{proof}

\begin{proposition}
\label{GIRpart3thrm2}
Under the Whitham subcharacteristic condition and the CFL condition 
\begin{equation}
\max_j \frac{\Delta t}{\Delta x_j} 
\Big( (u_{j-\frac{1}{2}}^{*})^{+} - (u_{j+\frac{1}{2}}^{*})^{-} \Big) < 1
,
\label{eq: CFL implicit}
\end{equation}
the implicit-explicit scheme satisfies the following stability properties:

{\it (i)} it is a conservative scheme for the water height $h$. It is also a conservative scheme for $h u$ when the topography source term vanishes,

{\it (ii)} the water height $h^n_j$ is positive for all $j$ and $n>0$ provided that $h^0_j$ is positive for all $j$,

{\it (iii)} it is well-balanced,

{\it (iv)} it satisfies a discrete entropy inequality, 

{\it (iv)} and it gives the usual implicit-explicit Lagrange-Projection scheme when the bottom is flat.
\end{proposition}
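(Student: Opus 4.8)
The plan is to run through the five items following the scheme of Proposition~\ref{GIRpart3thrm1} for the explicit scheme, isolating the two genuinely new points: the positivity and the entropy dissipation of the \emph{implicit} Lagrangian step. The three easy items come first. Conservativity (i) is read off the combined update: inserting the implicit fluxes \eqref{eq: implicit acoustic numerical flux} into the transport step gives, exactly as in \eqref{eq: overal scheme explicit}, an update of $h$ whose interface flux $u^*_{j+1/2}h^{n+1-}_{j+1/2}$ is single-valued, hence conservative, and an update of $hu$ whose interface flux $u^*_{j+1/2}(hu)^{n+1-}_{j+1/2}+\Pi^*_{j+1/2}$ is likewise single-valued, the only non-flux contribution being the topography term $\Delta t\,h_j^n\{\tfrac{g}{\tau}\partial_m z\}_j^n$, which vanishes when $\partial_x z=0$. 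Well-balancedness (iii) is precisely the preceding Lemma together with the fact that the transport step acts trivially when $u^{1-}\equiv 0$. The flat-bottom item is immediate by inspection: for constant $z$ one has $\mround\equiv 0$ and all the terms $\{\tfrac{g}{\tau}\partial_m z\}$ and the vectors $b_\pm^n$ disappear, so \eqref{eq: implicit acoustic characteristic} reduces to the standard implicit acoustic solver and the scheme to the usual implicit-explicit Lagrange--Projection scheme; this is nothing but condition b) imposed on the approximate Riemann solver.

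Next I would prove positivity (ii). The transport step is unconditionally harmless: by \eqref{eq: transport scheme L_j} the value $h_j^{n+1}$ is, under the material CFL \eqref{eq: CFL implicit}, a convex combination of $h_{j-1}^{n+1-},h_j^{n+1-},h_{j+1}^{n+1-}$, so it suffices to prove $h_j^{n+1-}>0$, i.e. $L_j=1+\tfrac{\Delta t}{\Delta x_j}(u^*_{j+1/2}-u^*_{j-1/2})>0$, for the implicit Lagrangian step. Here I would exploit the characteristic formulation \eqref{eq: implicit acoustic characteristic}: the invariants $\overrightarrow{w}^{n+1-}$ and $\overleftarrow{w}^{n+1-}$ solve triangular systems governed by the $M$-matrices $I_N+a\Delta t A_+^n$ and $I_N-a\Delta t A_-^n$ (positive diagonal, non-positive off-diagonal, non-negative inverse), hence remain controlled by the data; combined with the equilibrium initialisation $\Pi_j^n=\peos(\tau_j^n)$ this yields $L_j>0$ for any $\Delta t>0$. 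This is the implicit counterpart of the positivity argument of \cite{ccmgsk2013} and \cite{ccmgsk-cicp}, to which I would refer for the detailed estimate. Then $h_j^{n+1-}=h_j^n/L_j>0$ and positivity propagates to $t^{n+1}$.

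The heart of the proof is the discrete entropy inequality (iv), which I would obtain in two stages. First, for the implicit Lagrangian step, I would establish a discrete version of the relaxation entropy balance $\partial_t\Sigma+\partial_m(\Pi u)=-\tfrac{\lambda}{a^2}(p(\mathcal{T})-\Pi)^2-g\tfrac{u}{\tau}\partial_m z$ for the quantity $h\Sigma$, with $\Sigma$ given by \eqref{defsigmabbaro}. Three ingredients enter: (1) the reset $\Pi\leftarrow\peos(\tau)$ at the start of each step decreases $h\Sigma$, by the standard Gibbs/subcharacteristic argument under the Whitham condition \eqref{eq: whitham condition}, so that one always has $h\Sigma\geq\mathcal{U}$ with equality at equilibrium; (2) in characteristic variables the update is an \emph{implicit} upwind scheme for $\overrightarrow{w}$ and $\overleftarrow{w}$, for which a discrete entropy inequality holds, the implicit character only adding dissipation (a convexity/Jensen argument); (3) the non-conservative topography term is handled exactly through the consistency relation \eqref{eq: integral consistency} of the approximate Riemann solver, whose entropy consistency in the sense of Gallice \cite{16}, \cite{17} was already invoked in the Remark following Proposition~\ref{GIRpart3thrm1}. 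Chaining these, the Lagrangian step dissipates $\mathcal{U}$ up to the flux differences and the discrete source term. Second, the transport step \eqref{eq: transport scheme} is the standard space-dependent upwind transport of the conserved variables, which under \eqref{eq: CFL implicit} dissipates the convex entropy $\mathcal{U}(\U)=hu^2/2+gh^2/2$, the divergence terms $u^*_{j+1/2}-u^*_{j-1/2}$ being absorbed using $h\Sigma|_{\mathrm{eq}}=\mathcal{U}$; this is exactly the argument of \cite{ccmgsk2013} and \cite{ccmgsk-cicp}. Adding the two inequalities produces a discrete counterpart of \eqref{entropyIneq}.

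I expect the main obstacle to be the first stage of (iv): propagating the entropy balance through the implicit Lagrangian solver while keeping the non-conservative product $(g/\tau)\partial_m z$ under control and verifying that implicitness never spoils the sign of the dissipation terms. Positivity of the implicit acoustic step, item (ii), is the second delicate point; in both the Whitham condition \eqref{eq: whitham condition} is the crucial hypothesis, whereas the CFL \eqref{eq: CFL implicit} is needed only for the explicit transport step.
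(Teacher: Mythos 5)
Your items (i), (iii) and the flat-bottom degeneracy match the paper, and your entropy sketch follows essentially the paper's route (Appendix B, after Coquel et al.): a quadratic energy estimate for the implicit upwind update of the characteristic variables $\overrightarrow{w},\overleftarrow{w}$, where implicitness produces the dissipative square terms, combined with the invariance of $\Pi+a^2\tau$, the equilibrium initialisation, a Taylor expansion under the Whitham condition, and Jensen's inequality for the transport step. The genuine gap is in your positivity argument (ii). You claim that the triangular/$M$-matrix structure of the implicit systems for $\overrightarrow{w}^{n+1-},\overleftarrow{w}^{n+1-}$, together with the equilibrium initialisation of $\Pi$, yields $L_j>0$ ``for any $\Delta t>0$''. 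This is not justified and is false in general: boundedness of the characteristic variables gives no sign information on $L_j=1+\frac{\Delta t}{\Delta x_j}(u^*_{j+1/2}-u^*_{j-1/2})$, which becomes negative for large $\Delta t$ in a compression region ($u^*_{j+1/2}<u^*_{j-1/2}$); the implicit Lagrangian step is not unconditionally positivity preserving. The paper's argument is the same as in the explicit case: the CFL condition \eqref{eq: CFL implicit}, which bears on the very interface velocities $u^*_{j+1/2}$ used by the scheme, already implies $L_j>0$ since $(u^*_{j-1/2})^{+}-(u^*_{j+1/2})^{-}\geq u^*_{j-1/2}-u^*_{j+1/2}$; hence $h^{n+1-}_j=h^n_j/L_j>0$, and the transport update is then a convex combination of the $h^{n+1-}_k$ under the same CFL. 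In particular your closing sentence reverses the roles of the hypotheses: positivity rests solely on \eqref{eq: CFL implicit} (for both the Lagrangian and the transport parts), while the Whitham condition is needed only for the entropy inequality.

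On item (iv) itself, your ingredient (3) is too loose to stand as written: the paper does not invoke the consistency condition a) or Gallice's entropy framework for the implicit scheme (that remark concerned the explicit scheme only). Instead it carries the topography through the quadratic estimate explicitly, producing the specific discrete non-conservative source $\left\{hu\partial_x z\right\}_j$ built from $\overrightarrow{w}^{n+1-}_j$, $\overleftarrow{w}^{n+1-}_j$ and the interface averages of $h^n$, and the resulting statement is a \emph{non-conservative} discrete entropy inequality consistent with \eqref{entropyIneq}; a conservative discrete inequality is left open. If you complete your stage (2) computation you must therefore keep the source term inside the multiplications by $\overrightarrow{w}^{n+1-}_j$ and $\overleftarrow{w}^{n+1-}_j$ rather than appeal to an abstract entropy consistency, otherwise the right-hand side of your final inequality is not identified.
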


\noindent {\bf Proof.} The properties are obtained in the same way as in the explicit case, except for the well-balanced property which has already been proved in the previous Lemma, and the validity of the entropy inequality which is proved  in appendix \ref{appendix:proof_proposition_implicite}.
\makeatletter{}%
\section{Numerical results}

\renewcommand{\thetable}{\Alph{table}}

The aim of this section is to illustrate the behaviour 
of our Lagrange-Projection like strategies in one space dimension. 
We will also compare the results with the simple, well-balanced, positive and entropy-satisfying scheme recently proposed in \cite{acu} (the scheme will be referred to as the HLLACU scheme) and the very well-known hydrostatic reconstruction scheme \cite{1} based on a classic HLL scheme and referred to as HRHLL in the following.

Let us first notice that two (classic) options will be considered in order to evaluate 
the artificial sound speed $a$ involved in the acoustic step. Let 
$\kappa > 1$. 
The first one is based on a local definition of the Lagrangian sound speed 
in agreement with a local evaluation of the subcharacteristic condition, namely
\begin{equation}
a_{j+1/2} = \kappa \max \left(h_j^n \sqrt{gh_j^n}, h_{j+1}^n \sqrt{gh_{j+1}^n}\right)
\label{eq: local choice of a}, \quad \forall \, j \, \in \mathbb{Z},
\end{equation}
while the second one considers an uniform estimate by setting
\begin{equation}
a_{j+1/2} =
\kappa \max_{i\in\nbZ} \left(h_i^n \sqrt{gh_i^n}\right), \quad \forall \, j \, \in \mathbb{Z}.
\label{eq: uniform choice of a}
\end{equation}
In practice, we set $\kappa = 1.01$.
For the sake of conciseness, the full-explicit scheme will be referred to as \EXEXLOC{} (resp. \EXEXGLOB{}) and the semi-implicit scheme will be referred to as \IMEXLOC{} (resp. \IMEXGLOB{}) when \eqref{eq: local choice of a} (resp. \eqref{eq: uniform choice of a}) is used.

Let us also mention for all the test cases, uniform space steps $\Delta x$ will be considered and the time steps $\Delta t$ will be chosen in agreement with the 
CFL conditions \eqref{eq: CFL explicit} and \eqref{eq: CFL implicit}. 
More precisely, we will set (unless otherwise stated) 
\begin{equation} \label{cflnum1}
\Delta t = \frac{\Delta x}{2 \max_{j \in \mathbb{Z}} (\sqrt{g {h}_j^n},\lvert u_{j+1/2}^{*} \rvert)},
\end{equation}
for the explicit schemes, and  
\begin{equation} \label{cflnum2}
\Delta t = \frac{\Delta x}{2 \max_{j \in \mathbb{Z}} (\lvert u_{j+1/2}^{*} \rvert)}, \end{equation}
for the implicit ones,
where $u_{j+1/2}^{*}$ is calculated at time $t^n$ for the sake of simplicity.

Before starting, let us finally mention that initial data matching the lake at rest condition~\eqref{genLaR} are preserved by construction by the \EXEXK{} and \IMEXK{} schemes, $k=\mbox{loc}, \mbox{glob}$. Therefore, such test cases will not be considered hereafter.

\subsection{Dam break problem}\label{section: dam break test}
We first consider the classical dam break. 
The space domain $[0,1500]$ is divided into two parts with the same length 
and such that the water height is higher on the left side,
\[
h(x,t=0)=20,\quad\text{if $x\leq 750$}
,\quad
h(x,t=0)=15,\quad\text{if $x > 750$.}
\]
The velocity is set to be zero on both side at the initial time when the dam breaks and the water starts flowing.
Importantly, the topography is not flat but given by the regularized two-step 
function 
\[
z(x) = \begin{cases}
4 e^{2-\frac{150}{x-487.5}}, & \text{ if } 487.5<x<=562.5, \\
8 - 4 e^{2-\frac{150}{637.5-x}}, & \text{ if } 562.5<x<=637.5, \\
8, & \text{ if } 637.5<x<=862.5, \\
8 - 4 e^{2-\frac{150}{x-862.5}}, & \text{ if } 862.5<x<=937.5, \\
4 e^{2-\frac{150}{1012.5 -x}}, & \text{ if } 937.5<x<=1012.5, \\
0 & \text{ otherwise.}
\end{cases}
\]
At last, the spatial domain is discretized over a $1500$-cell grid and Neumann boundary conditions are used.
Figures~\ref{fig: DamBreak} and ~\ref{fig: DamBreak} show the solutions at final times 
$T=10$ and $T=50$ with different numerical strategies. %
The following comments are in order. We first observe that the implicit schemes are the most diffusive, which was clearly expected from the implicit treatment of the acoustic step. Note also that our Lagrangian-Projection schemes are intrinsically 
made of two averaging steps, which is necessary to separate the acoustic and transport effects, but at the price of additional numerical diffusion compared to a direct Eulerian approach like the one proposed in the HLLACU scheme. We also observe that a local definition of parameter $a$ is preferable to the global one in order to reduce the numerical diffusion. For this reason, we will only consider the local evaluation in the following test cases ($k=\mbox{loc}$). As far as the time step step is concerned,
we observed for this test case that the averaged value (calculated from the time iterations needed to reach the final time $T=50$) is about five times larger for the 
\IMEXLOC{} than for the \EXEXLOC{} schemes.

\begin{figure}
\centering
\begin{tabular}{cc}
\includegraphics[width=.49\textwidth]{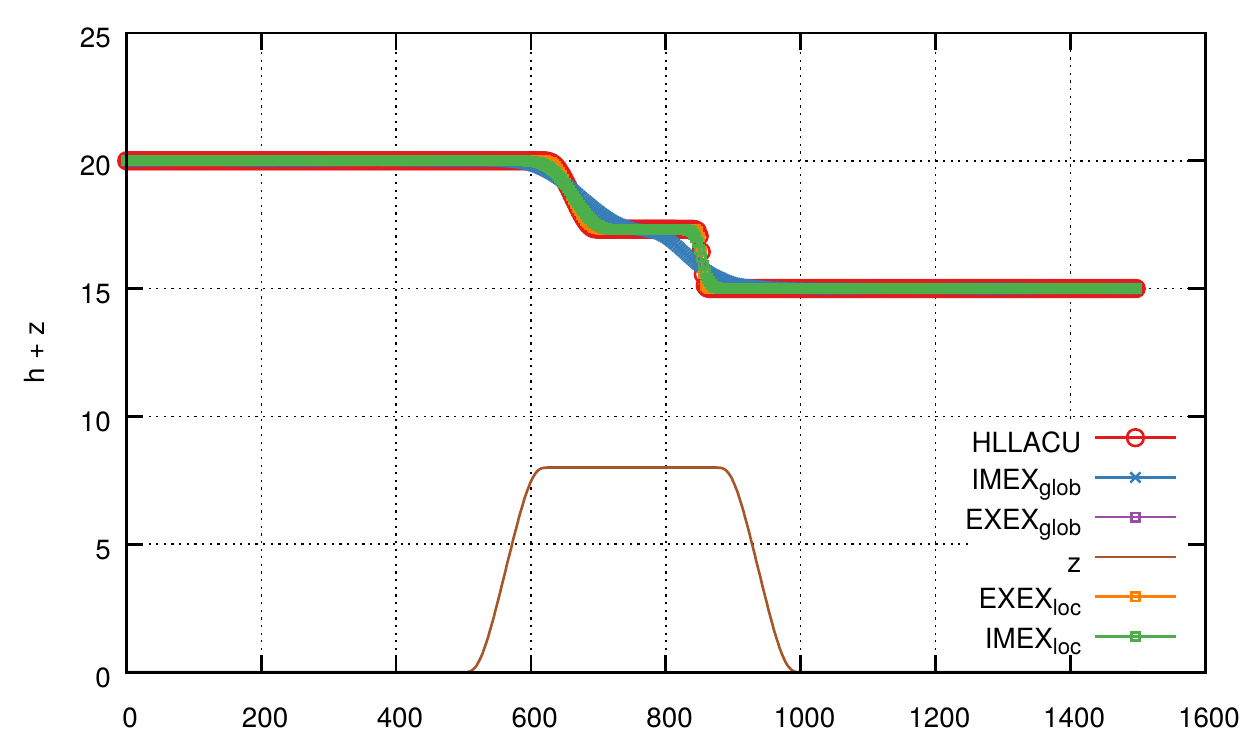}
&
\includegraphics[width=.49\textwidth]{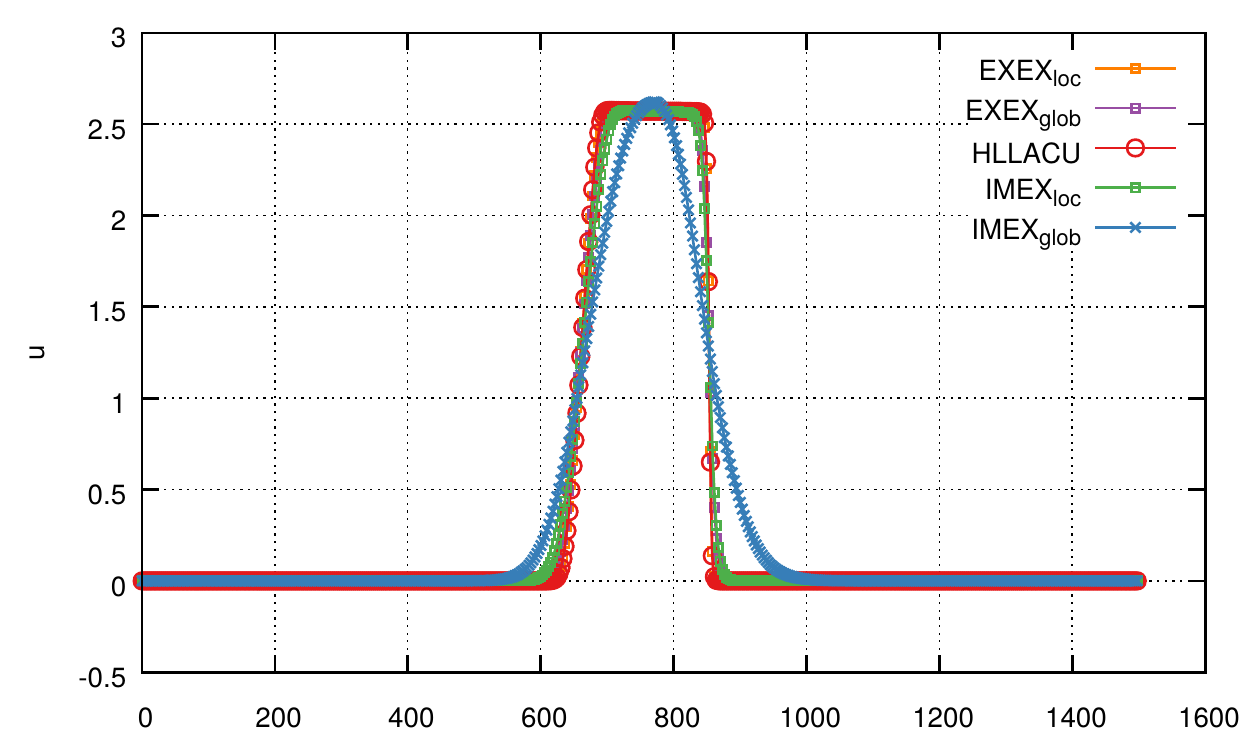}
\end{tabular}\caption{Dam Break problem. Profile of $z+h$ and $u$ at time $T=10$.}
\label{fig: DamBreak}
\end{figure}

\begin{figure}
\centering
\begin{tabular}{cc}
\includegraphics[width=.49\textwidth]{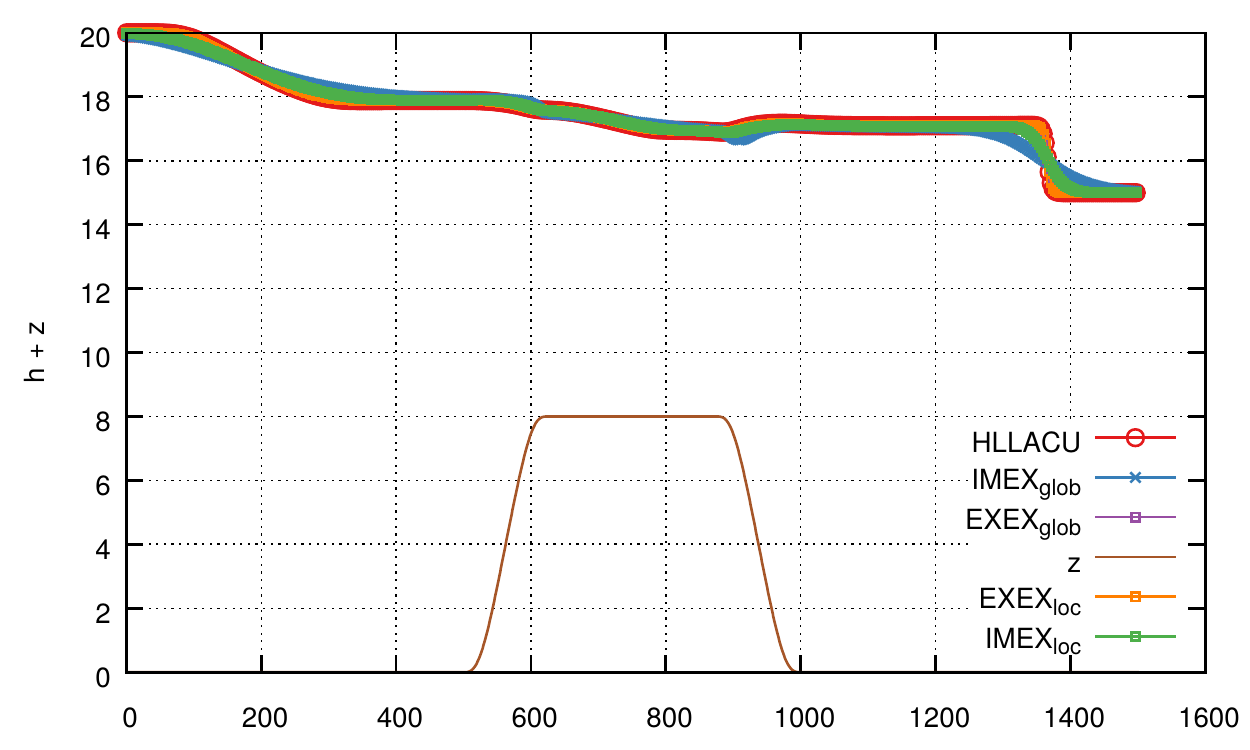}
&
\includegraphics[width=.49\textwidth]{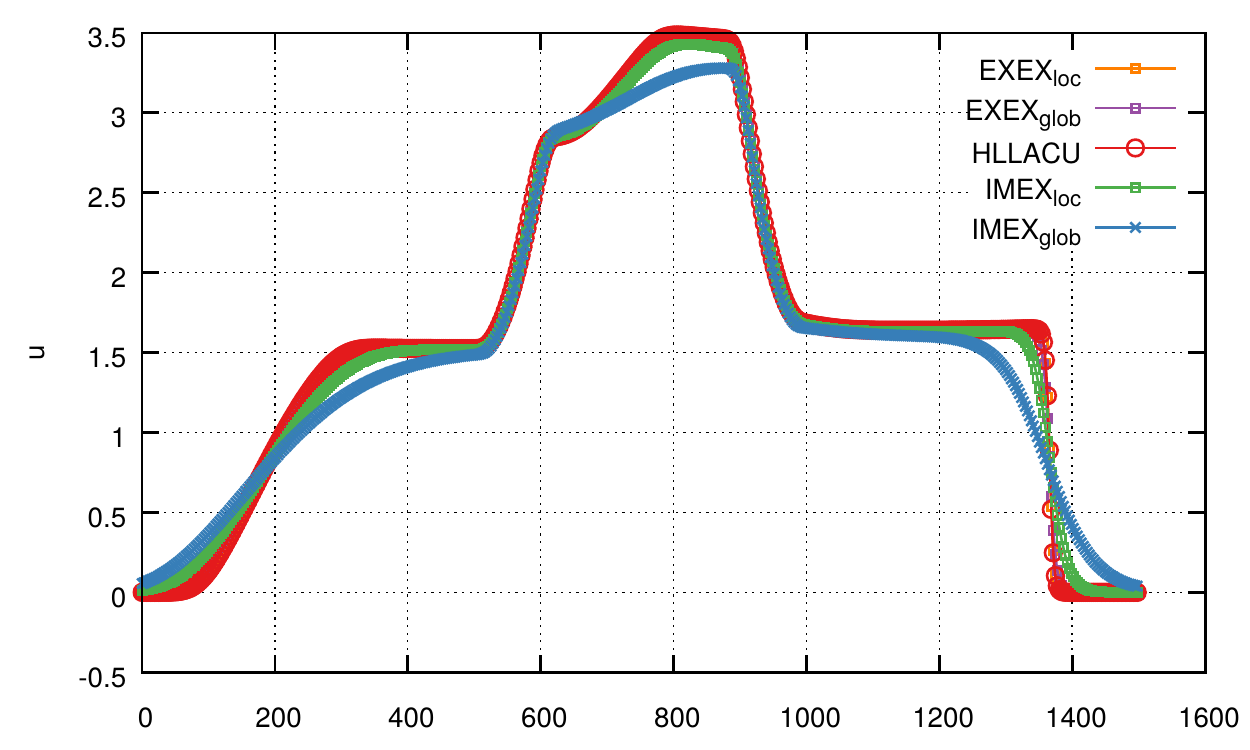}
\end{tabular}\caption{Dam Break problem. Profile of $z+h$ and $u$ at time $T=50$.}
\label{fig: DamBreak2}
\end{figure}

\subsection{Propagation of perturbations} 
This test case focuses on the perturbation of a steady state solution by a pulse that splits into two opposite waves. More precisely, the space domain is reduced to the interval $[0,2]$, the bottom topography is defined by
$z(x)=2+0.25(\cos(10\pi(x-0.5))+1)$ { if } $1.4<x<1.6$, and $2$ otherwise, and the 
initial state is such that
$u(0,x)=0$ and
$h(0,x)=3-z(x)+\Delta h$ if $1.1<x<1.2$, and $3-z(x)$ otherwise,
where $\Delta h = 0.001$ is the height of the perturbation.  
The CFL parameter is set to 0.9 (instead of $1/2$ in (\ref{cflnum1}) 
and (\ref{cflnum2})), the final time is $T=0.2$, the space step equals $\Delta x = {1}/{500}$ and Neumann boundary conditions are used.%
It turns out that since the perturbation is small, the values of the velocity $u$ keeps a small amplitude during the whole computation. As an immediate consequence,  
considering the natural CFL condition (\ref{cflnum2}) gives very large time steps 
which naturally induces much numerical diffusion. In order to reduce the numerical 
diffusion and improve the overall accuracy of the numerical solution, the time step 
$\Delta t_{imp}$
given by (\ref{cflnum2}) was first limited to ten times the time step $\Delta t_{exp}$
given by (\ref{cflnum1}). In other words, we chose
$$
\Delta t = \min(10 \Delta t_{exp}, \Delta t_{imp}) 
$$
for this test case.
Figure~\ref{fig: Perturb} compares the numerical solutions given by the \EXEXLOC{}, \IMEXLOC{} and HLLACU schemes. The implicit scheme is clearly more diffusive than the explicit ones.
Note that the so-called reference solution is given by the solution of the HLLACU scheme on a 10000-cell grid.

Figure~\ref{fig: Perturb2} shows that same solutions but the implicit scheme is now run using the explicit CFL restriction (\ref{cflnum1}). As expected, the numerical approximation is more accurate and the numerical diffusion is significantly reduced.
At last, %
Figure~\ref{fig: Perturb3} shows the numerical solutions 
using a $10000$-cell grid. The schemes converge to the same solution. %

\begin{figure}
\centering
\begin{tabular}{cc}
\includegraphics[width=.49\textwidth]{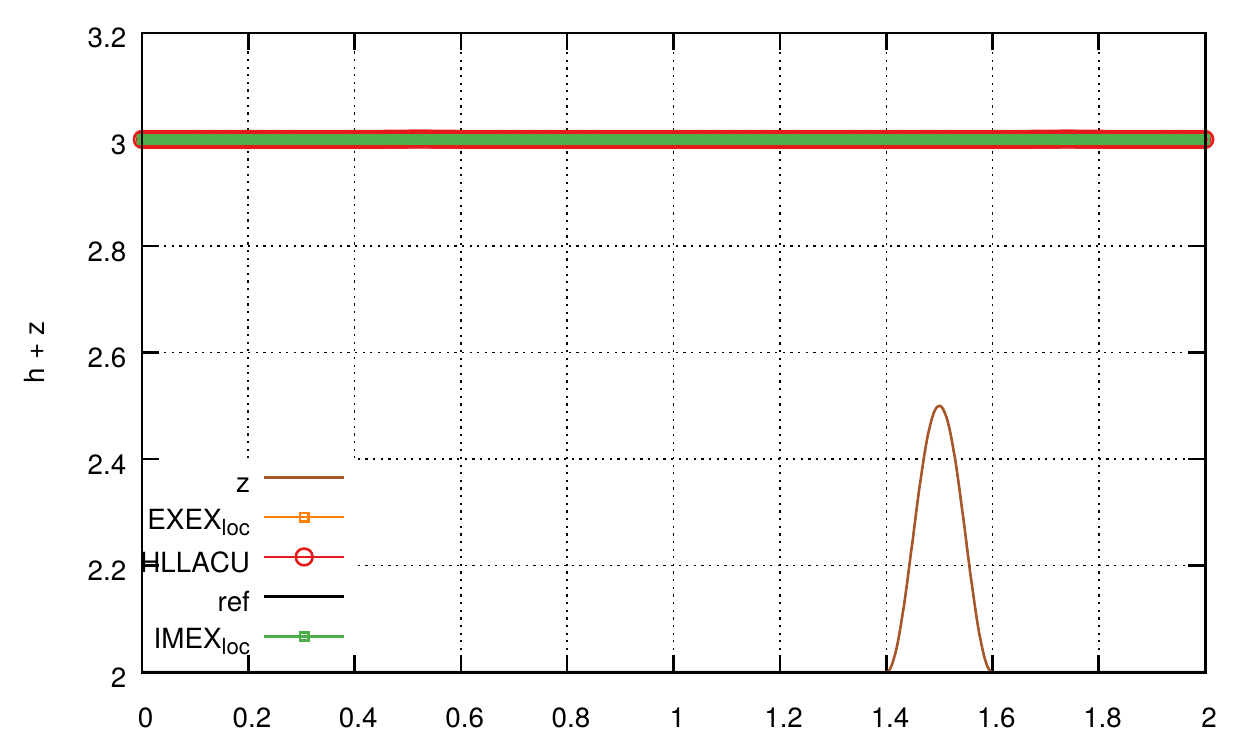}
&
\includegraphics[width=.49\textwidth]{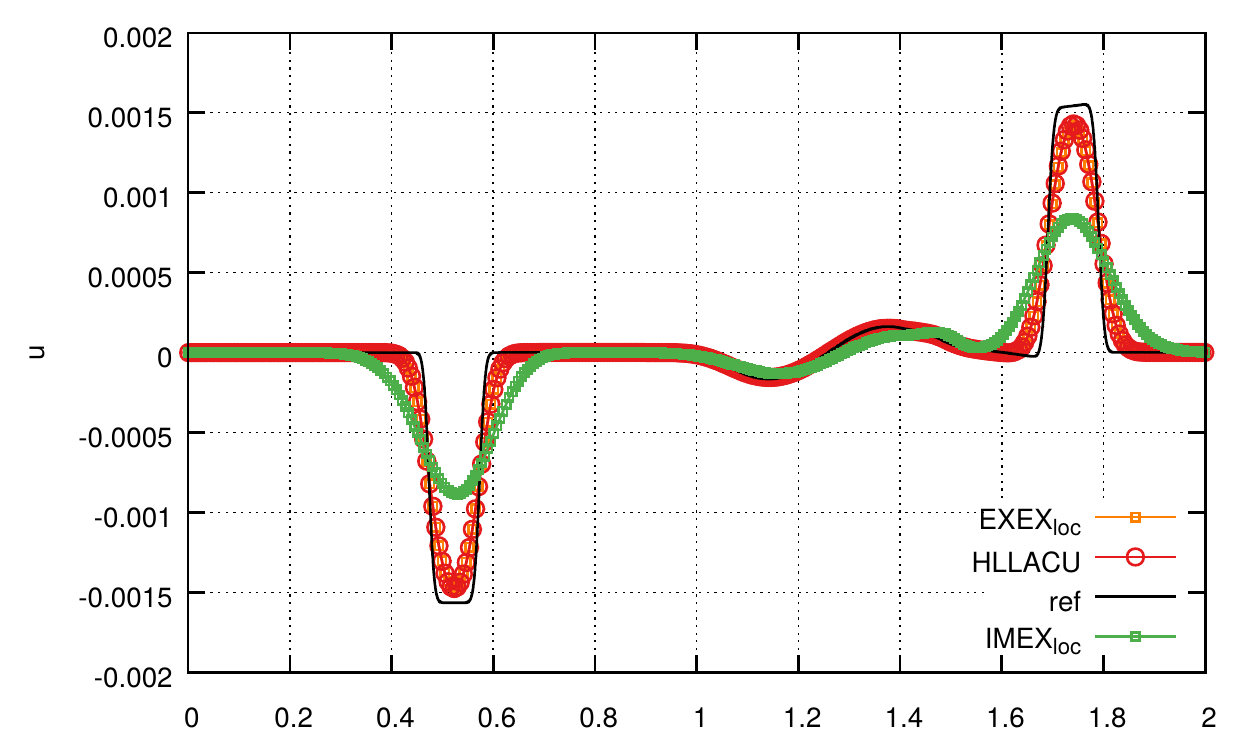}
\end{tabular}\caption{Propagation of perturbations test at final time $T=0.2$. On the left : total heights $h+z$, on the right : velocities $u$, with $\Delta x = 1/500$.}
\label{fig: Perturb}
\end{figure}

\begin{figure}
\centering
\begin{tabular}{cc}
\includegraphics[width=.49\textwidth]{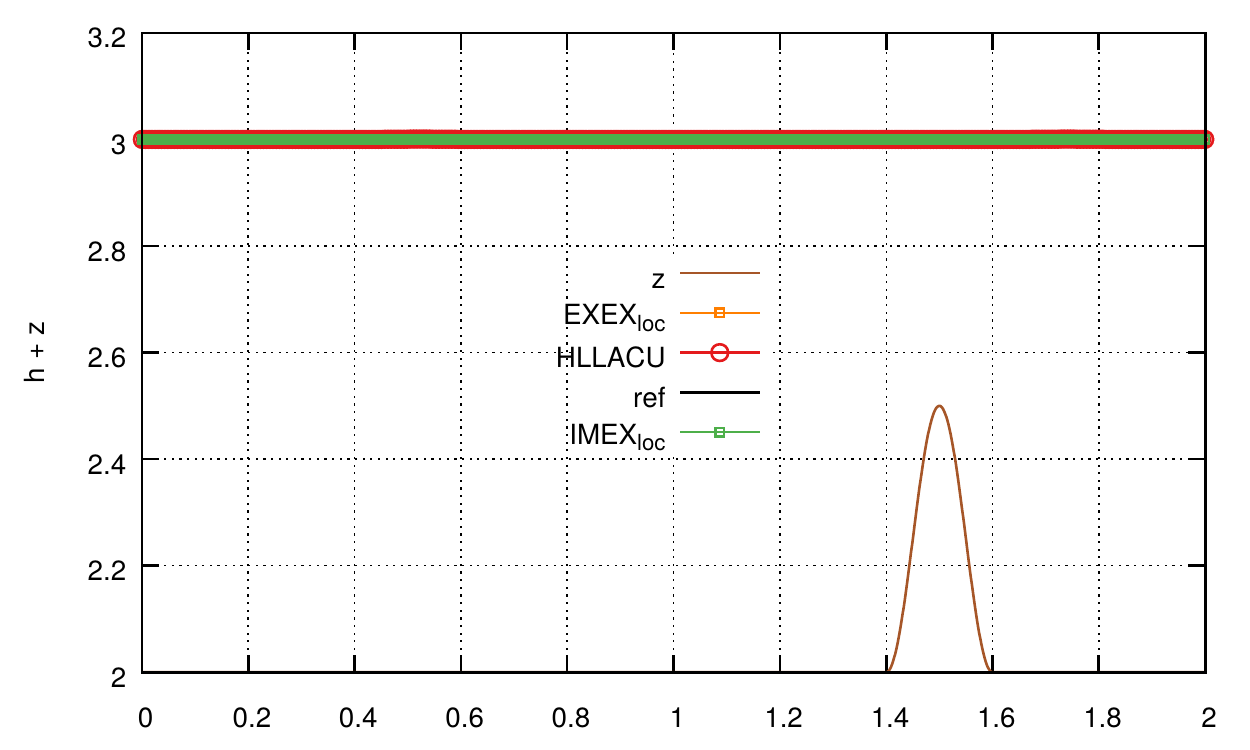}
&
\includegraphics[width=.49\textwidth]{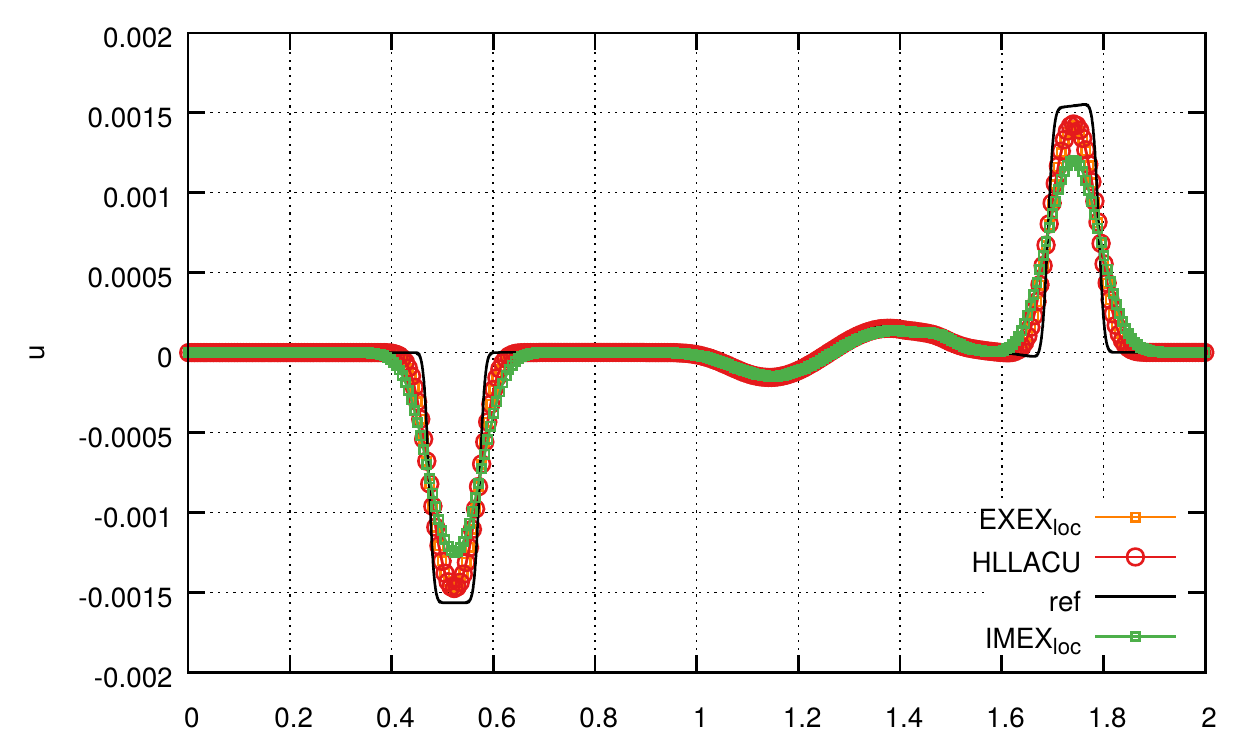}
\end{tabular}\caption{Propagation of perturbations test at final time $T=0.2$. On the left : total heights $h+z$, on the right : velocities $u$, with $\Delta x = 1/500$. Here, the implicit scheme is run using the explicit CFL restriction (\ref{cflnum1}).}
\label{fig: Perturb2}
\end{figure}

\begin{figure}
\centering
\begin{tabular}{cc}
\includegraphics[width=.49\textwidth]{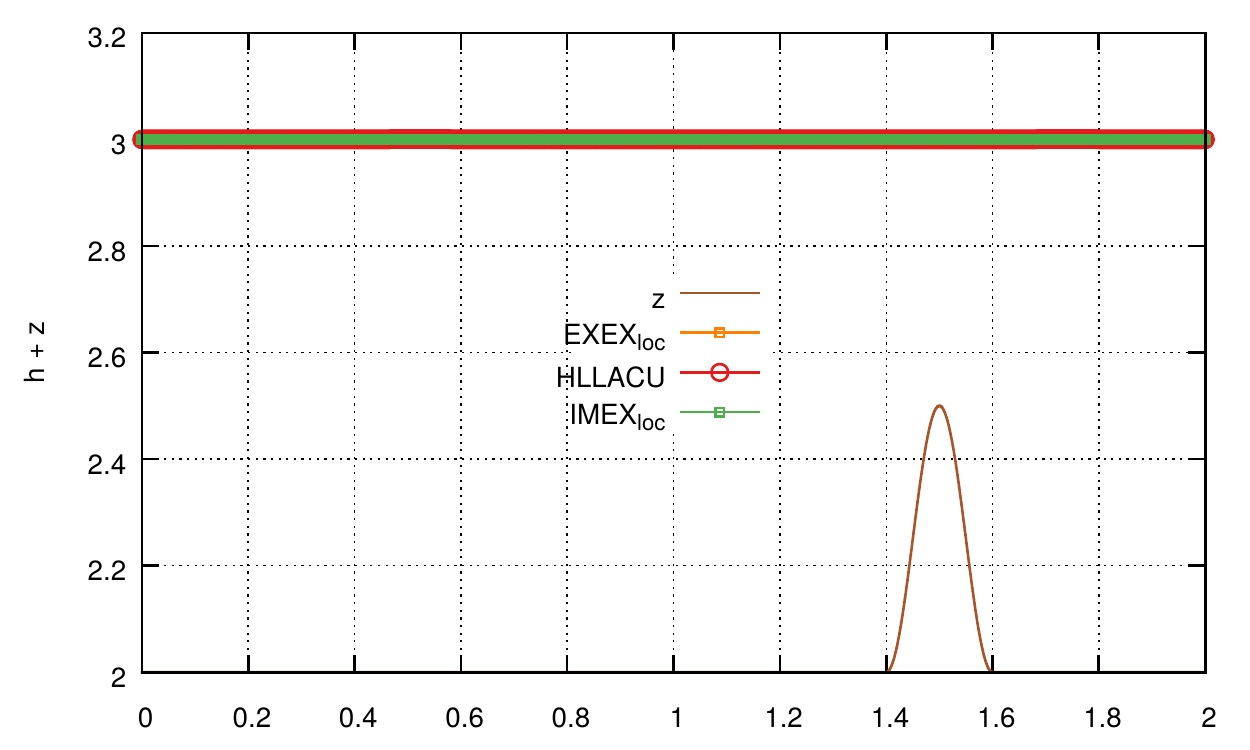}
&
\includegraphics[width=.49\textwidth]{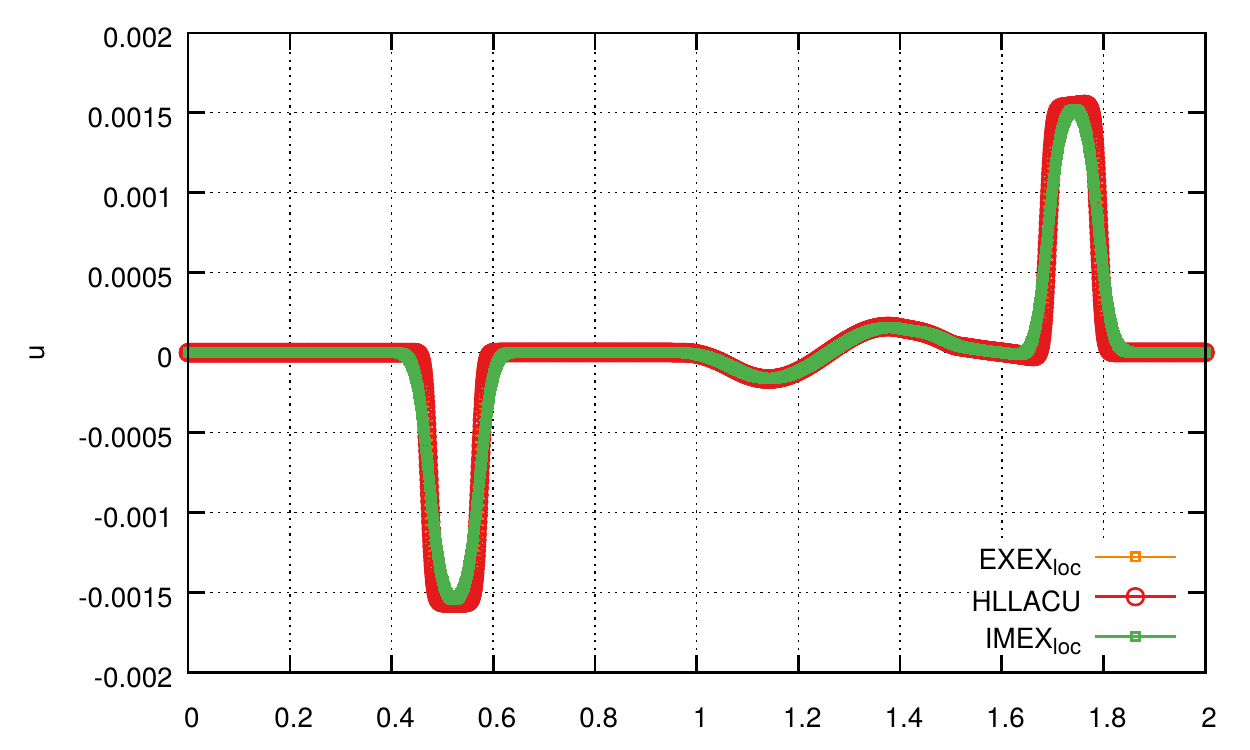}
\end{tabular}\caption{Propagation of perturbations test at final time $T=0.2$. On the left : total heights $h+z$, on the right : velocities $u$, with $\Delta x = 1/5000$.}
\label{fig: Perturb3}
\end{figure}

\subsection{Steady flow over a bump.}
The aim of this test case is to test the ability of the schemes to converge to some moving water equilibrium. %
Let us remind that the steady states are governed by the equations
$hu = K_1$ and $\displaystyle \frac{u^2}{2}+g(h+z)=K_2$.

\paragraph{Fluvial regime :} In this test case, we set $K_1=1$ and $K_2=25$, we denote $h_{eq}(x), u_{eq}(x)$ the values of $h$ and $u$ at this equilibrium. The domain is $[0,4]$ and the bottom topography is defined by
$z(x)=({\cos(10\pi(x-1))+1})/{4}$ if $1.9 \leq x \leq 2.1$ and $0$ elsewhere.
The CFL parameter is equal to 0.5 and the space step to $\Delta x = {1}/{400}$. %
The initial condition is chosen out of equilibrium and given by $h=h_{eq}$ and $u=0$. The boundary conditions are set to be
$$
\left\{
\begin{array}{l}
\partial_x h(x=0)= 0, \\
(hu)(x=0)=K_1,
\end{array}
\right.
\quad \mbox{and} \quad
\left\{
\begin{array}{l}
h(x=4)=h_{eq}(x=4), \\
\partial_x(hu)(x=4)= 0.
\end{array}
\right.
$$ 
Figure~\ref{fig: Fluvial} shows the solution at the final time $t=200$. We 
can observe that the solutions are close to the expected equilibrium, except near the mid domain where the momentum is not yet constant for the mesh size under consideration. The Lagrange-Projection schemes give numerical solutions very close to the one obtained with the HRHLL scheme based on the hydrostatic reconstruction, while the ACU scheme is clearly more accurate. Note also that on this test case, the implicit CFL condition 
(\ref{cflnum2}) allows to use time steps up to ten times larger than the explicit 
condition (\ref{cflnum1}). 
\begin{figure}
\centering
\begin{tabular}{cc}
\includegraphics[width=.49\textwidth]{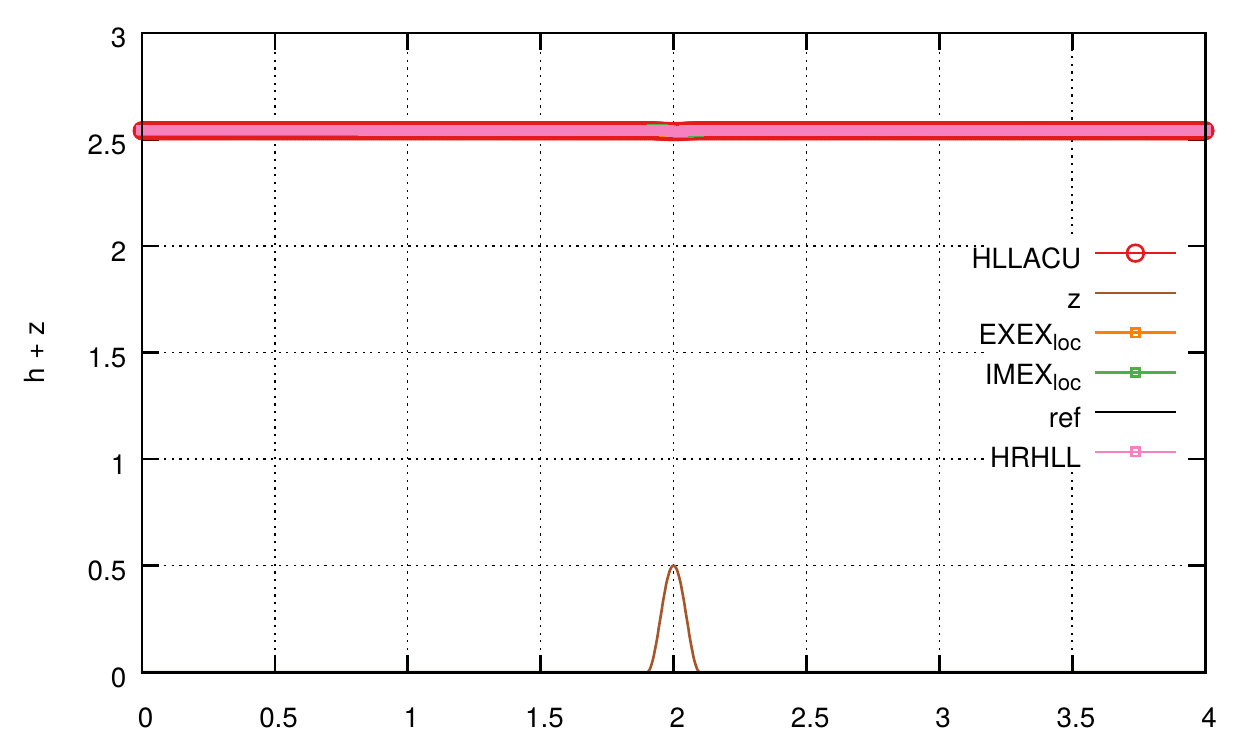}
&
\includegraphics[width=.49\textwidth]{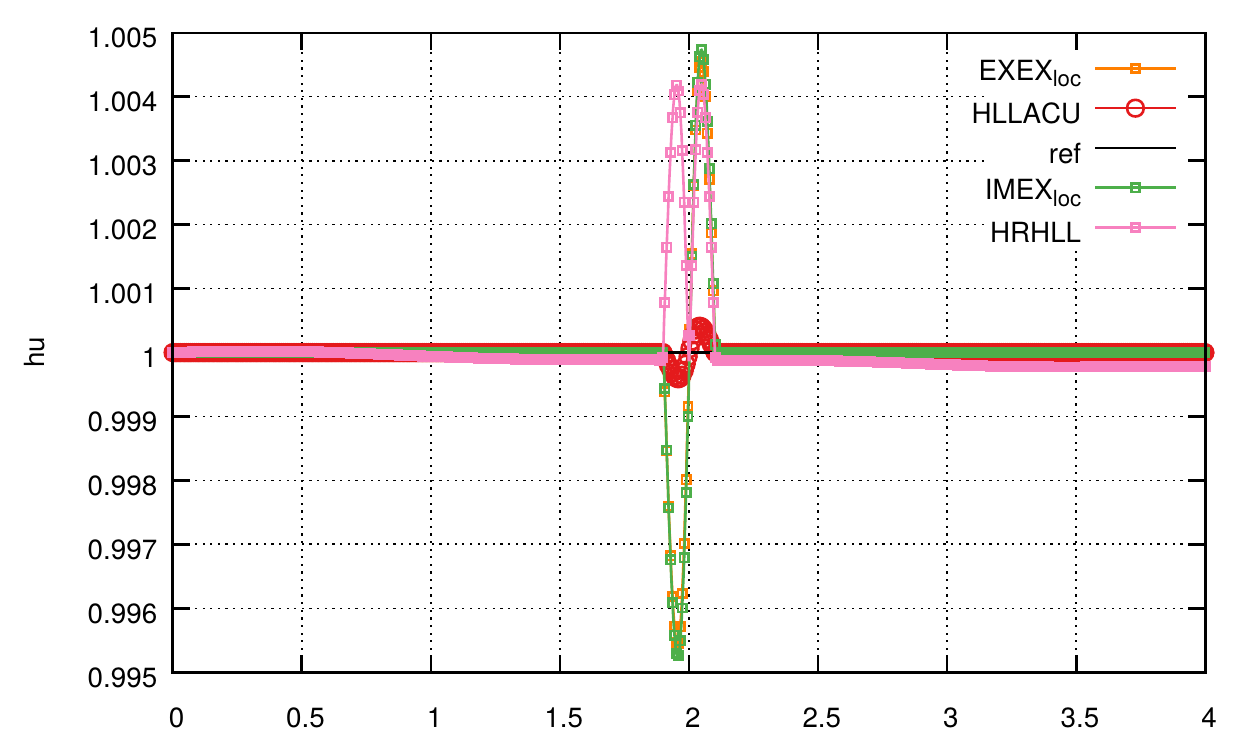}
\end{tabular}
\caption{Fluvial regime at time $T=200$. On the left : total heights $h+z$, on the right : discharge $hu$.}
\label{fig: Fluvial}
\end{figure}

\paragraph{Transcritical regime without shock :} In this test case, we set $K_1=3$, $K_2=\frac{3}{2}(K_1 g)^{2/3}+\frac{g}{2}$. We used the same boundary conditions and started from the same initial condition as in the previous simulation.
The solutions are shown at time $t=10$ on Figure~\ref{fig: TransSteady}. 

\begin{figure}
\centering
\begin{tabular}{cc}
\includegraphics[width=.49\textwidth]{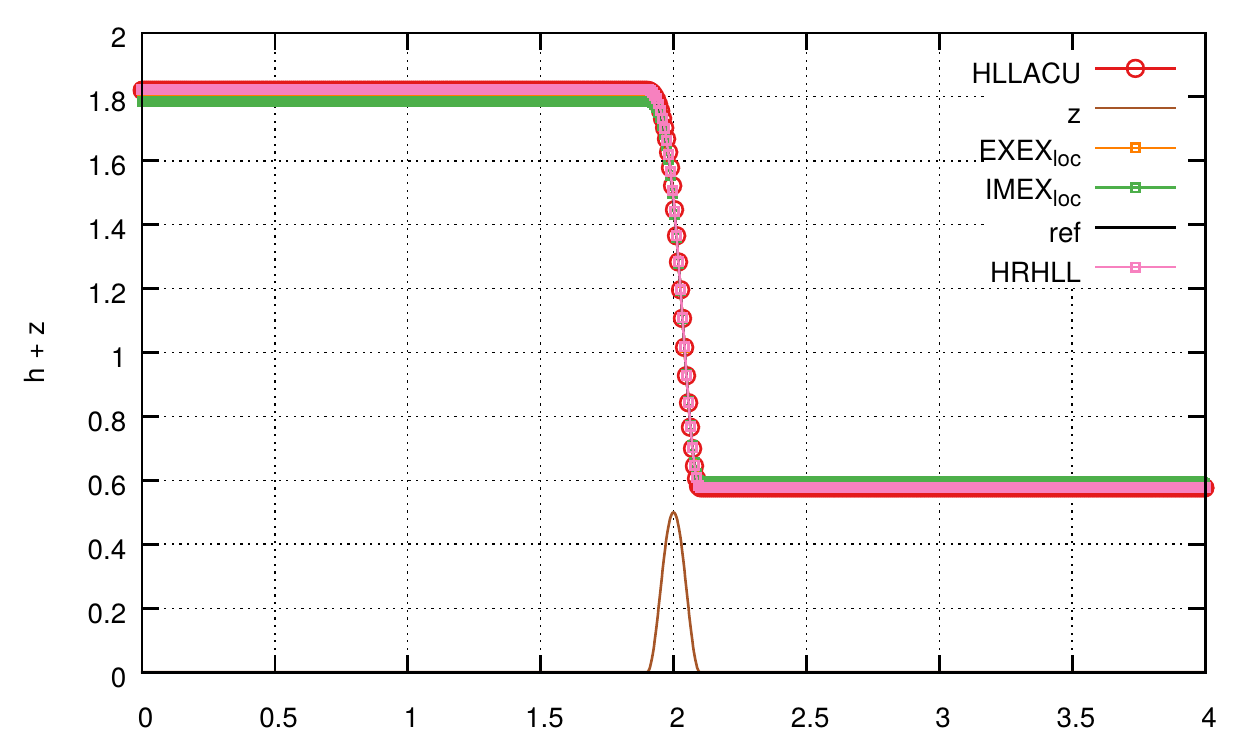}
&
\includegraphics[width=.49\textwidth]{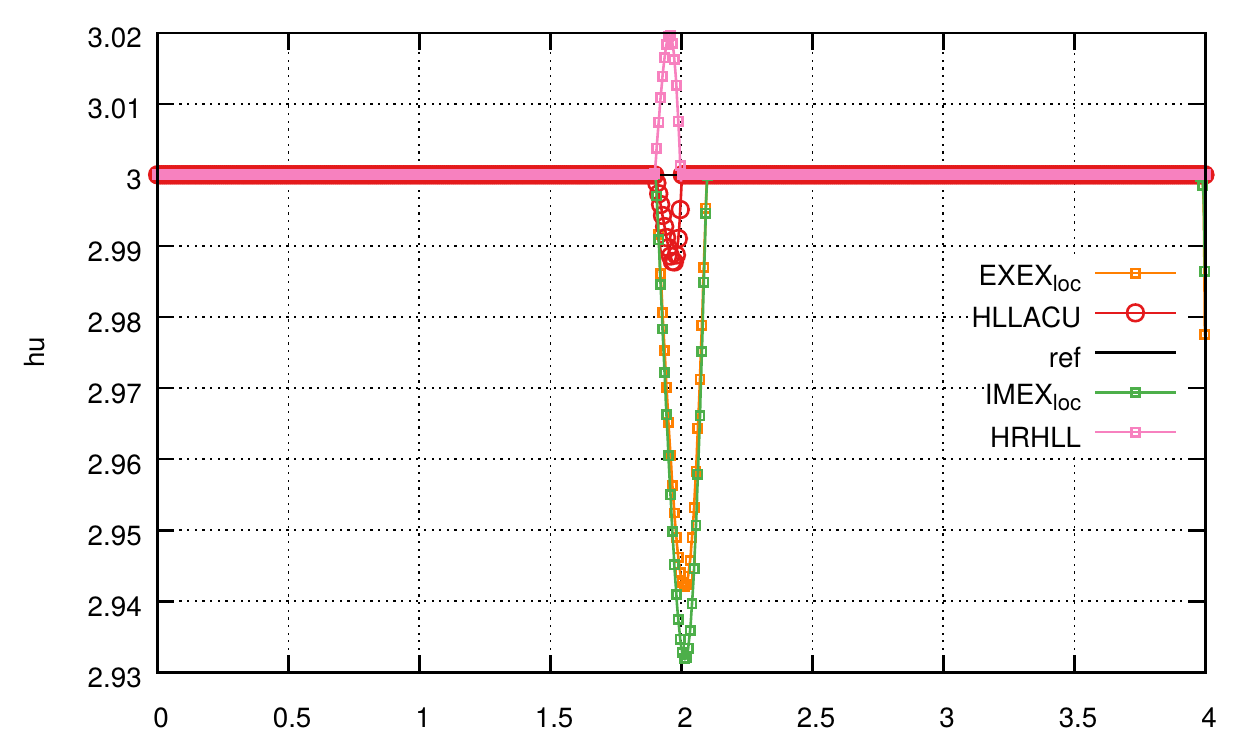}
\end{tabular}
\caption{Transcritical regime without shock. On the left : total heights $h+z$, on the right : discharge $hu$.}
\label{fig: TransSteady}
\end{figure}

\paragraph{Transcritical regime with shock :} This test has been proposed by Castro et al. \cite{12}. The parameters are described hereafter: the space domain is the interval $\left[0,25\right]$, the bottom topography is defined by $z(x)=3-0.005 (x-10)^2$, if $8<x<12$, and $2.8$ otherwise. The initial state is defined by $h(0,x)=3.13-z(x)$, $q(0,x)=0.18$ and the boundary conditions are $q(t,0)=0.18$, $\partial_x q(t,25)=0$, $h(t,25)=0.33$ and $\partial_x h(t,0)=0$. The final time is set to $t=200$, the space step to $\Delta x = {1}/{64}$ and the CFL to $0.9$.
We can see on the Figure~\ref{fig: Transcritical} 
that we obtain similar results with the different schemes.
\begin{figure}
\centering
\begin{tabular}{cc}
\includegraphics[width=.49\textwidth]{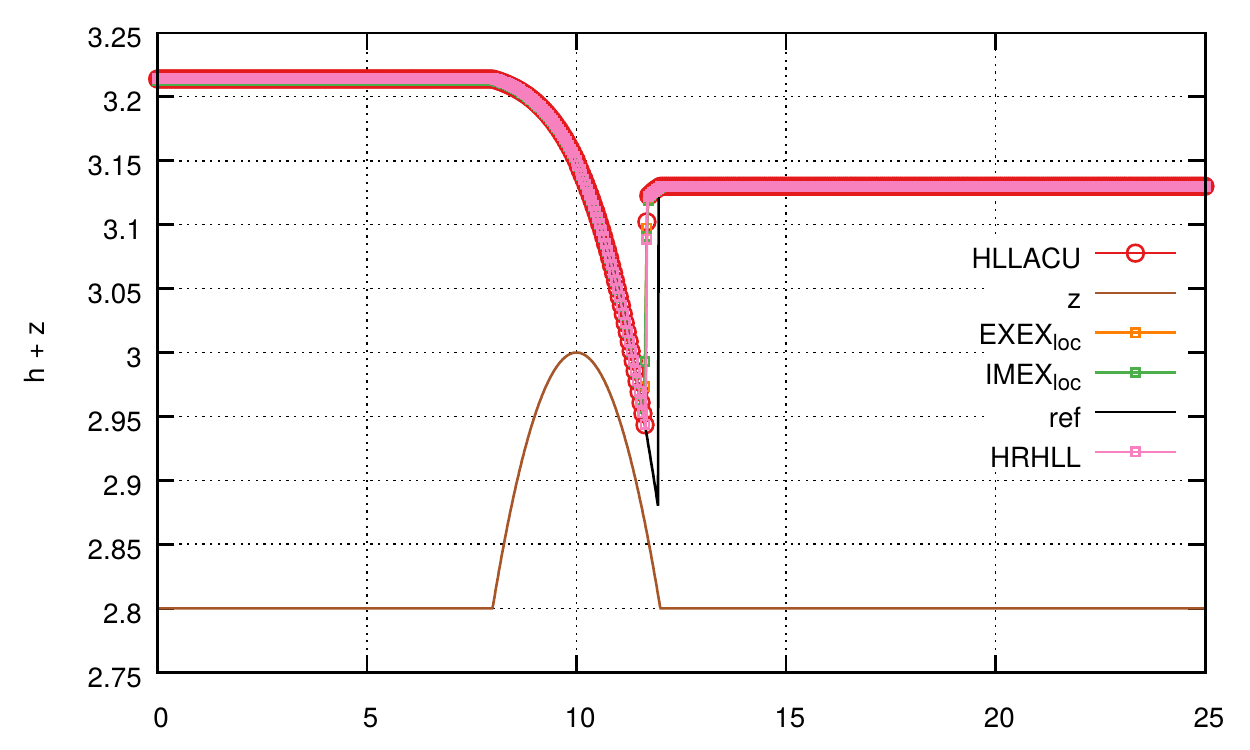}
&
\includegraphics[width=.49\textwidth]{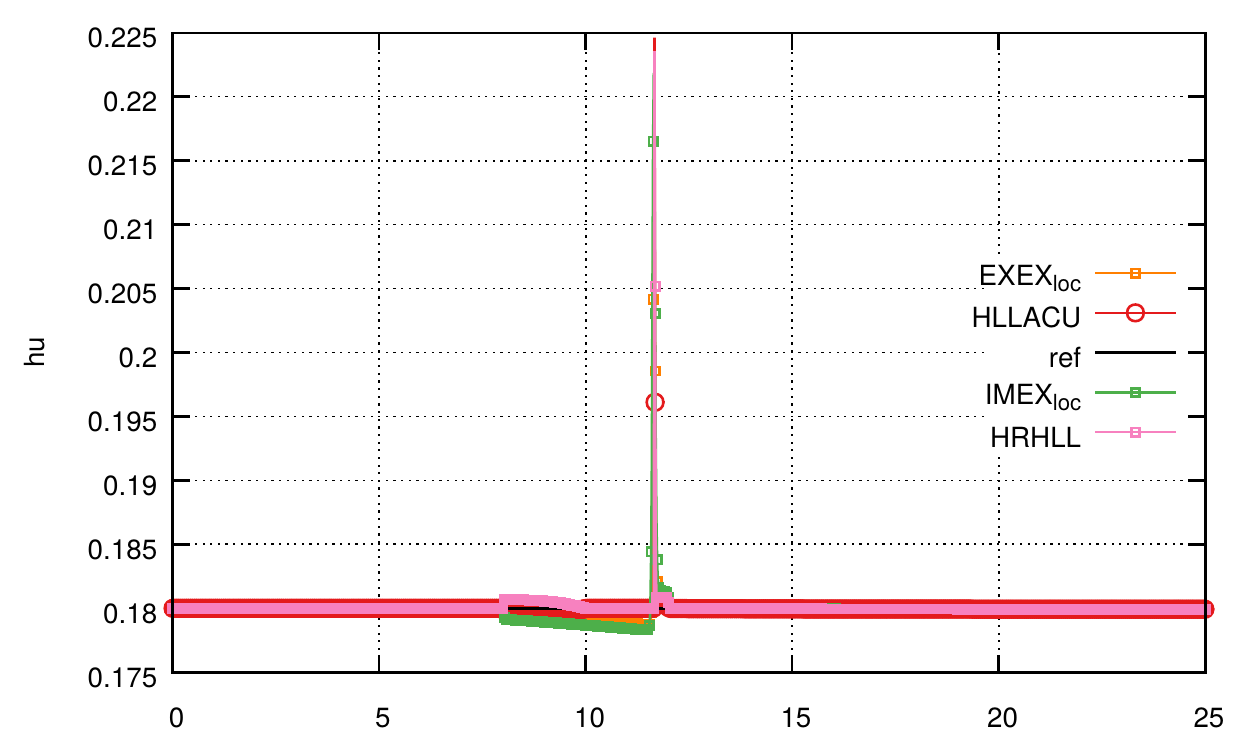}
\end{tabular}
\caption{Transcritical regime with shock at final time $T=200$. On the left : total heights $h+z$, on the right : discharge $hu$.}
\label{fig: Transcritical}
\end{figure}

\subsection{Non-unique solution to the Riemann problem}

This aim of this test case is to consider a Riemann problem for which the entropy solution is not unique, in order to see whether the numerical schemes capture the same solution or not. The spatial domain is $\left[0,1\right]$, the gravitational acceleration $g$ is set to $2$ and the CFL coefficient equals $0.9$. Note however that considering the mixed implicit-explicit scheme, the time step $\Delta t$ was restricted to three times the explicit time step, namely
$$
\Delta t = \min(3 \Delta t_{exp}, \Delta t_{imp}) 
$$
where we have used the same notations as in the propagation of perturbations test case.
The final time $T=0.1$ and the space step is $\Delta x = {1}/{300}$.
The initial data is given by
\begin{equation*}
(z,h,u)^T = 
\begin{cases}
(1.5,1.3,-2)^T & \text{ if } x\leq 0.5,\\
(1.1,0.1,-2)^T & \text{ if } x>0.5,
\end{cases}
\end{equation*} 
and we used Neumann boundary conditions.
It is quite interesting to observe on Figure~\ref{fig: NonUnique} that the methods proposed in the present paper and the hydrostatic scheme seem to converge to the same solution, while the HLLACU scheme capture a quite different solution. 

\begin{figure}
\centering
\begin{tabular}{cc}
\includegraphics[width=.49\textwidth]{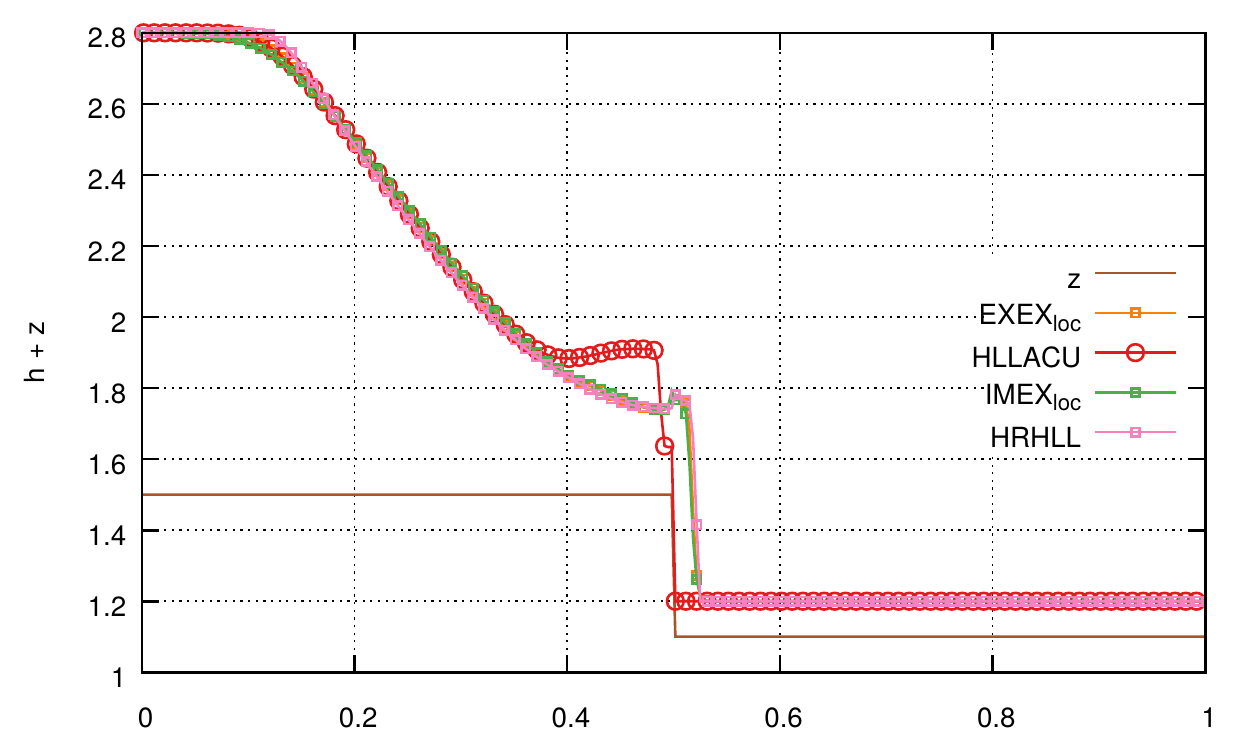}
&
\includegraphics[width=.49\textwidth]{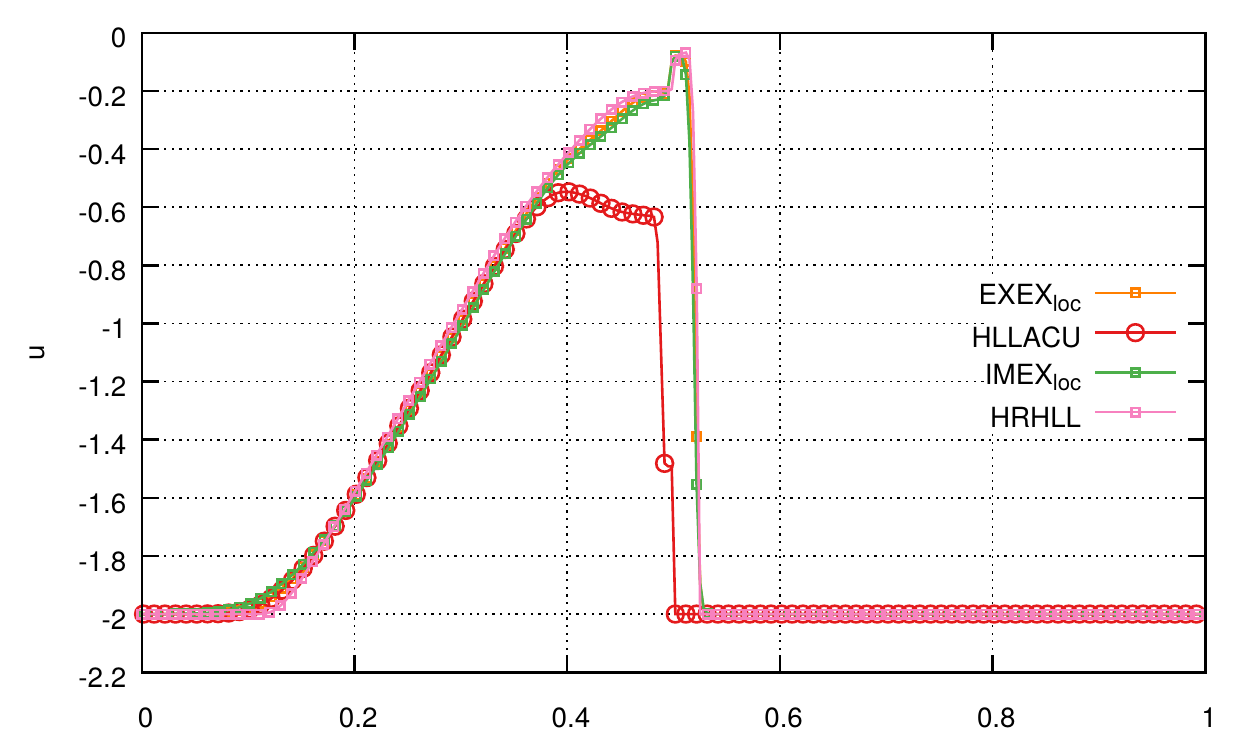}
\end{tabular}
\caption{Non-unique solution test case at final time $T=0.1$. On the left : total heights $h+z$, on the right : velocities $u$.}
\label{fig: NonUnique}
\end{figure}

\makeatletter{}%
\section*{Conclusion}

We have proposed a large time step and well-balanced scheme for the 
shallow-water equations and proved stability properties 
under a time step CFL restriction based on the material velocity $u$ and not on the sound speed $c$ as it is customary. The Lagrangian-Projection decomposition 
proved to be efficient on a variety of test cases, but may be more diffusive than a 
direct Eulerian approach. We believe that the proposed implicit-explicit strategy is especially well adapted for subsonic flows but even more for large Froude numbers, which is our very motivation and the purpose 
of an ongoing work in several space dimensions. Works in progress also include a high-order accuracy extension using discontinuous Galerkin strategies for the space variable and Runge-Kunta techniques for the time variable. \\
\ \\
{\bf Acknowledgement}. This work was partially supported by a public grant as part of the
Investissement d’avenir project, reference ANR-11-LABX-0056-LMH, LabEx LMH. The authors also thank S. Noelle and H. Zakerzadeh for useful and interesting discussions on the topic. %

\makeatletter{}%
\makeatletter{}%
\appendix

\section{Eigenstructure of the relaxed acoustic system}
\label{appendix: eigenstructure acoustic relaxed}
Considering smooth solutions, the homogeneous relaxed acoustic system~\eqref{eq: acoustic m relaxation}  reads

\begin{equation}
\partial_t \W + A(W) \partial_m\W =0
,\quad
A = 
\begin{pmatrix}
0 & -1 & 0 & 0
\\
0 & 0 & 1 & g / \tau
\\
0 & a^2 & 0 & 0
\\
0 & 0 & 0 & 0
\\
\end{pmatrix}
.
\label{eq: acoustic relaxed m quasilinear}
\end{equation}
The matrix the eigenvalues of $A$ are $\{-a, 0,+a \}$. A basis of right eigenvectors of $A$ is
$$
\br_0^{(1)} = (1,0,0,0)^T
,\qquad
\br{}_0^{(2)} = (0,0,-g,\tau)^T
,\qquad
\br_{\pm} = (1,\mp a, -a^2,0)^T
$$
where $\br_0^{(1)}$ and $\br_0^{(1)}$ are associated with the double eigenvalue $0$ and  $\br_{\pm}$ is associated with $\pm a$. The system~\eqref{eq: acoustic relaxed m quasilinear} is thus hyperbolic. All the characteristic fields of \eqref{eq: acoustic relaxed m quasilinear} are linearly degenerate.

The $(\pm a)$-field possesses three Riemann invariants
$$
I^{1}_{\pm} = \pi \mp au
,\qquad
I^{2}_{\pm} = u \pm a \tau
,\qquad
I^{3}_{\pm} = z
.
$$
As a consequence, the states $\W_L$ and $\W_R$ that can be connected by a $(\pm a)$-wave can be obtained thanks to the continuity of the $(\pm a)$-Riemann invariants, which amounts to verify the jump relations
\begin{equation}
\mp a (\W_R - \W_R) + \G(\W_R) - \G(\W_R) = 0.
\label{eq: jump relation wave pm a}
\end{equation}
Unfortunately, the eigenvalue $0$ is of multiplicity 2 and the $0$-field only has a single Riemann invariant
$$
I_0 = u
.
$$
Therefore we can only state that if two states $\W_L$ and $\W_R$ are connected by a $0$-wave then
\begin{equation}
u_R = u_L.
\label{eq: jump relation wave 0}
\end{equation}

\section{Proof of the discrete entropy inequality of Proposition \ref{GIRpart3thrm2}}
\label{appendix:proof_proposition_implicite}

The proof of the discrete entropy inequality follows exactly the same lines as the one proposed in \cite{quote6} for the barotropic gas dynamics equations, but taking into account here the presence of the topography source term. It naturally leads to a non conservative version of the entropy inequality. A discrete and conservative entropy inequality for the proposed algorithm remains an open problem so far.
Our result states as follows.

\begin{lemma}
We have the following discrete form of the entropy inequality \eqref{entropyIneq} for all $j\in \mathbb{Z}$, namely
\[
\mathcal{U}_j^{n+1} - \mathcal{U}_j^{n} + \frac{\Delta t}{\Delta x_j} 
 \left(\mathcal{F}_{j+1/2}^{n+1-}-\mathcal{F}_{j-1/2}^{n+1-}\right) \leq  - \Delta t \, g \left\{hu\partial_xz\right\}_j ,
\] 
with the entropy numerical fluxes
\[
\mathcal{F}_{j+1/2}^{n+1-} = \left({\Pi}_{j+1/2}^* +  \mathcal{U}^{n+1-}_{j+1/2}\right) \tilde{u}^{*}_{j+1/2},
\]
where 
\[
\mathcal{U}^{n+1-}_{j+1/2}=\begin{cases}
\mathcal{U}^{n+1-}_{j} & \text{if } u^{*}_{j+1/2}\geq 0,\\
\mathcal{U}^{n+1-}_{j} & \text{if } u^{*}_{j+1/2} < 0,
\end{cases}
\] 
and 
\[
\left\{
\begin{aligned}
\tilde{u}^*_{j+1/2} &= \frac{u_j^{n+1-}+u_{j+1}^{n+1-}}{2} - \frac{1}{2a} \left(\Pi_{j+1}^{n+1-}-\Pi_j^{n+1-}\right), \\
{\Pi}^*_{j+1/2} &= \frac{\Pi_j^{n+1-}+\Pi_{j+1}^{n+1-}}{2} - \frac{a}{2} \left(u_{j+1}^{n+1-}-u_j^{n+1-}\right),
\end{aligned}
\right.
\]
are consistent with $\mathcal{F}$, 
and the non conservative source term 
\[
\left\{hu\partial_xz\right\}_j = \frac{1}{2 a \Delta x_j} \left[\frac{h_{j}^n+h_{j-1}^n}{2} \overrightarrow{w}_j^{n+1-} \left(z_{j}-z_{j-1}\right) - \frac{h_{j+1}^n+h_j^n}{2} \overleftarrow{w}_j^{n+1-} \left(z_{j+1}-z_j\right) \right]
\]
is consistent with $hu \partial_x z$.
\end{lemma}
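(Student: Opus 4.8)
The plan is to prove the inequality one splitting step at a time, following \cite{quote6}: a discrete relaxation-entropy estimate for the implicit Lagrangian step, a convexity estimate for the explicit transport step, and a combination of the two, carrying the topography terms along the way.

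For the Lagrangian step I would work in the characteristic variables $(\tau,\overrightarrow{w},\overleftarrow{w},z)$ of \eqref{eq: acoustic relaxed sym var}, in which, by \eqref{eq: implicit acoustic characteristic}, the implicit scheme decouples into two \emph{scalar} implicit upwind transport schemes for $\overrightarrow{w}$ and $\overleftarrow{w}$ (each carrying a topography source), together with an explicit update for $\tau$. Each scalar implicit upwind scheme satisfies a discrete entropy inequality for every convex weight — it is order-preserving and no less dissipative than its explicit analogue — and these are recombined: writing the relaxation entropy $\Sigma$ of \eqref{defsigmabbaro}, with $\Pi$ eliminated via $\Pi=(\overrightarrow{w}+\overleftarrow{w})/2$, as a convex function of $\tau$ plus convex contributions in $\overrightarrow{w}$ and $\overleftarrow{w}$, one assembles a cell-wise estimate for $\Delta m_j\,\Sigma_j^{n+1-}$ whose conservative flux is $\Pi^*_{j+1/2}\,\tilde u^*_{j+1/2}$ (the flat-bottom velocity enters here because the topography source lives only in the $\overrightarrow{w},\overleftarrow{w}$ equations, not in the extreme waves that carry the entropy) and whose non-conservative remainder, collecting the source terms of those equations, is exactly $\Delta t\,g\,\{hu\partial_x z\}_j$. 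Passing from $\Sigma$ to $\mathcal{U}$ with the Whitham condition \eqref{eq: whitham condition}, which gives $\tfrac1\tau\Sigma(\tau,u,\Pi)\ge\mathcal{U}(1/\tau,u/\tau)$ with equality at equilibrium, and using $\Delta m_j\Sigma_j^n=\Delta x_j\mathcal{U}_j^n$ together with $L_j=h_j^n\tau_j^{n+1-}$ from \eqref{eq: acoustic explicit h update}--\eqref{eq: def of L_i explicit scheme}, one obtains
\[
L_j\,\mathcal{U}_j^{n+1-}\ \le\ \mathcal{U}_j^n-\frac{\Delta t}{\Delta x_j}\bigl(\Pi^*_{j+1/2}\tilde u^*_{j+1/2}-\Pi^*_{j-1/2}\tilde u^*_{j-1/2}\bigr)-\Delta t\,g\,\{hu\partial_x z\}_j .
\]

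For the transport step I would use the form \eqref{eq: transport scheme L_j}: under the CFL condition \eqref{eq: CFL implicit}, $\U_j^{n+1}$ is a convex combination of $\U_{j-1}^{n+1-},\U_j^{n+1-},\U_{j+1}^{n+1-}$ with weights read off from $L_j$ and the interface velocities $u^*_{j\pm1/2}$; convexity of $\U\mapsto\mathcal{U}(\U)$ then yields $\mathcal{U}_j^{n+1}$ bounded by the same convex combination, which after regrouping reads $\mathcal{U}_j^{n+1}\le L_j\,\mathcal{U}_j^{n+1-}-\tfrac{\Delta t}{\Delta x_j}\bigl(\tilde u^*_{j+1/2}\mathcal{U}^{n+1-}_{j+1/2}-\tilde u^*_{j-1/2}\mathcal{U}^{n+1-}_{j-1/2}\bigr)$ up to source-type terms (stemming from $u^*-\tilde u^*$) that are again absorbed into $\{hu\partial_x z\}_j$, with $\mathcal{U}^{n+1-}_{j+1/2}$ taken upwind according to $\mathrm{sgn}\,u^*_{j+1/2}$. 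Adding this to the Lagrangian-step estimate, the terms $L_j\mathcal{U}_j^{n+1-}$ cancel and the two fluxes combine into $\mathcal{F}^{n+1-}_{j+1/2}=(\Pi^*_{j+1/2}+\mathcal{U}^{n+1-}_{j+1/2})\tilde u^*_{j+1/2}$, giving the claimed inequality. Consistency of $\mathcal{F}^{n+1-}_{j+1/2}$ with $\mathcal{F}$ and of $\{hu\partial_x z\}_j$ with $hu\partial_x z$ is then checked from the limits $\tilde u^*_{j+1/2}\to u$, $\Pi^*_{j+1/2}\to p=gh^2/2$, $\mathcal{U}^{n+1-}_{j+1/2}\to\mathcal{U}$ and a first-order Taylor expansion of $\overrightarrow{w},\overleftarrow{w}$ in the defining formula, which returns $\tfrac1{2a}\bigl(h(\Pi+au)-h(\Pi-au)\bigr)z'=hu\,z'$.

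I expect the Lagrangian step in the implicit case to be the main obstacle. Unlike the explicit scheme, there is no Riemann-fan-plus-averaging picture to lean on, so the entropy dissipation has to be extracted from the algebraic structure of the implicit linear system; the characteristic decoupling \eqref{eq: implicit acoustic characteristic} is what makes this feasible, reducing everything to scalar implicit upwind transport, for which discrete entropy inequalities are classical. The genuinely new and delicate part, compared with \cite{quote6}, is the bookkeeping of the topography source: one must verify that the source terms carried by the $\overrightarrow{w},\overleftarrow{w}$ equations, after recombination with the $u^*$-versus-$\tilde u^*$ discrepancy occurring in both steps, collapse to exactly the stated non-conservative term $-g\,\{hu\partial_x z\}_j$, with precisely the flat-bottom velocity $\tilde u^*$ appearing in the entropy flux.
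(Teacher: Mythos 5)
Your proposal follows essentially the same route as the paper's appendix proof: exploit the characteristic form \eqref{eq: implicit acoustic characteristic} of the implicit Lagrangian step, extract its dissipativity through quadratic estimates on $\overrightarrow{w}$ and $\overleftarrow{w}$, return to the physical entropy via the Whitham condition, and treat the transport step by convex combinations and Jensen's inequality, with the topography contribution generated entirely in the Lagrangian step. The only substantive difference is packaging, and there is one claim you should repair. The paper never invokes the relaxation entropy $\Sigma$ of \eqref{defsigmabbaro}: it works directly with $E=u^2/2+e(\tau)$, writes $E=e(\tau)+(\eta-\Pi^2)/(2a^2)$ with $\eta=\Pi^2+a^2u^2=\tfrac12\big((\overrightarrow{w})^2+(\overleftarrow{w})^2\big)$, gets the discrete balance for $\eta$ (flux $\Pi^*_{j+1/2}\tilde{u}^*_{j+1/2}$, source $\{hu\partial_xz\}_j$) by multiplying the implicit updates by $\overleftarrow{w}_j^{n+1-}$ and $\overrightarrow{w}_j^{n+1-}$, and then controls the remaining $\tau$-dependence through the exact cell-wise invariance $I_j^{n+1-}=I_j^n$ with $I=\Pi+a^2\tau$, combined with a Taylor expansion of $e$ whose sign is fixed by \eqref{eq: whitham condition} and the equilibrium initialization $\Pi^n_j=p(\tau^n_j)$. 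Your detour through $\Sigma$ and the minimization principle $E\le\Sigma$ is an equivalent repackaging of these ingredients, but the decomposition you state is not correct as written: $\Sigma$ does \emph{not} split into a convex function of $\tau$ plus convex functions of $\overrightarrow{w},\overleftarrow{w}$; the usable structure is $\Sigma=\frac{(\overrightarrow{w})^2+(\overleftarrow{w})^2}{4a^2}+\psi(\Pi+a^2\tau)$, and it is the invariance of $\Pi+a^2\tau$ under the implicit Lagrangian update (not any convexity in $\tau$ alone) that annihilates the non-quadratic part; with that correction your Lagrangian estimate coincides with the paper's. Finally, for the transport step the mismatch between $u^*_{j\pm1/2}$ (used by the scheme and for the upwinding of $\mathcal{U}^{n+1-}_{j\pm1/2}$) and $\tilde{u}^*_{j\pm1/2}$ (appearing in the stated entropy flux) is handled by you and by the paper at the same level of vagueness: the paper simply invokes the convex-combination/Jensen argument and defers to \cite{quote6}, so your ``absorption into the source term'' is no more and no less detailed than the original argument.
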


\begin{proof}\rule{0pt}{0pt}
Let us first observe that smooth solutions of 
\eqref{eq: acoustic m relaxation} satisfy %
\begin{equation} \label{eq: equality proof discrete ineq}
 \partial_t (\Pi^2 + a^2 u^2) + 2 a^2 \partial_m \Pi u = - \frac{2a^2gu}{\tau} \partial_m z.
\end{equation}
In order to obtain a discrete version of this equality, let us define 
$$
\eta_j^{n+1-} := \frac{(\overleftarrow{w}_j^{n+1-})^2 + (\overrightarrow{w}_j^{n+1-})^2}{2} = (\Pi_j^{n+1-})^2 + a^2 (u_j^{n+1-})^2,
$$
and
$$
q_{j+1/2}^{n+1-} := \frac{(\overrightarrow{w}_j^{n+1-})^2 - (\overleftarrow{w}_{j+1}^{n+1-})^2}{4a} = \Pi_{j+1/2}^* \, \tilde{u}_{j+1/2}^*.
$$
The formulas (\ref{eq: implicit acoustic characteristic}) also read
$$
\left\{
\begin{array}{l}
\displaystyle   \tau_j^{n+1^-} - \tau_j^n = \frac{\Delta t}{\Delta m_j} \left[ u_{j+1/2}^* - u_{j-1/2}^* \right],
  \\
  \displaystyle   \overleftarrow{w}_j^{n+1-} - \overleftarrow{w}_j^n = a \frac{\Delta t}{\Delta m_j} \left[\overleftarrow{w}^{n+1-}_{j+1} - \overleftarrow{w}^{n+1-}_{j} + g \frac{h_{j+1}^n+h_j^n}{2} \left(z_{j+1}-z_j\right) \right],
  \\
\displaystyle   \overrightarrow{w}_j^{n+1-} - \overrightarrow{w}_j^n =- a \frac{\Delta t}{\Delta m_j} \left[\overrightarrow{w}^{n+1-}_{j} - \overrightarrow{w}^{n+1-}_{j-1}  + g \frac{h_{j}^n+h_{j-1}^n}{2} \left(z_{j}-z_{j-1}\right) \right],
 \end{array}
\right.
$$
while adding the third equation of (\ref{eq: update acoustic implicit scheme W}) and $a^2$ times the first equation of (\ref{eq: update acoustic implicit scheme W}) also gives $I_j^{n+1-} = I_j^n$ where $I=\Pi+a^2 \tau$. Multiplying the second and the third equations above by 
$\overleftarrow{w}_j^{n+1-}$ and $ \overrightarrow{w}_j^{n+1-}$ then gives
$$
\left\{
\begin{array}{l}
\displaystyle   I_j^{n+1-} = I_j^n,
  \\
\displaystyle   \overleftarrow{w}_j^{n+1-} (\overleftarrow{w}_j^{n+1-} - \overleftarrow{w}_j^n) = a \frac{\Delta t}{\Delta m_j} \left[\overleftarrow{w}_j^{n+1-} \left(\overleftarrow{w}^{n+1-}_{j+1} - \overleftarrow{w}^{n+1-}_{j}\right) + \overleftarrow{w}_j^{n+1-} g \frac{h_{j+1}^n+h_j^n}{2} \left(z_{j+1}-z_j\right) \right],
  \\
\displaystyle   \overrightarrow{w}_j^{n+1-}(\overrightarrow{w}_j^{n+1-} - \overrightarrow{w}_j^n) =- a \frac{\Delta t}{\Delta m_j} \left[\overrightarrow{w}_j^{n+1-}\left(\overrightarrow{w}^{n+1-}_{j} - \overrightarrow{w}^{n+1-}_{j-1}\right)  + \overrightarrow{w}_j^{n+1-}  g \frac{h_{j}^n+h_{j-1}^n}{2} \left(z_{j}-z_{j-1}\right) \right],
 \end{array}
\right.
$$
that is to say, since 
$$
2b(b-a)=(b^2-a^2) + (b-a)^2 \quad \mbox{and} \quad
2b(a-b)=(a^2-b^2) - (b-a)^2,
$$
$$
\left\{
\begin{array}{l}
\displaystyle   I_j^{n+1-} = I_j^n,
  \\
\displaystyle 
\left((\overleftarrow{w}_j^{n+1-})^2 - (\overleftarrow{w}_j^{n})^2\right) - a \frac{\Delta t}{\Delta m_j}\left( (\overleftarrow{w}^{n+1-}_{j+1})^2 - (\overleftarrow{w}^{n+1-}_{j})^2\right)= \\
\qquad - (\overleftarrow{w}_j^{n+1-} - \overleftarrow{w}_j^{n})^2
+ a \frac{\Delta t}{\Delta m_j}\left[-\left(\overleftarrow{w}^{n+1-}_{j+1} - \overleftarrow{w}^{n+1-}_{j}\right)^2 + 2 \overleftarrow{w}_j^{n+1-} g \frac{h_{j+1}^n+h_j^n}{2} \left(z_{j+1}-z_j\right) \right],
  \\
\displaystyle   
\left((\overrightarrow{w}_j^{n+1-})^2 - (\overrightarrow{w}_j^{n})^2\right) + a \frac{\Delta t}{\Delta m_j}\left((\overrightarrow{w}^{n+1-}_{j})^2 - (\overrightarrow{w}^{n+1-}_{j-1})^2\right)= \\
\qquad-(\overrightarrow{w}_j^{n+1-} - \overrightarrow{w}_j^{n})^2 -
a \frac{\Delta t}{\Delta m_j}\left[\left(\overrightarrow{w}^{n+1-}_{j} - \overrightarrow{w}^{n+1-}_{j-1}\right)^2 + 2 \overrightarrow{w}_j^{n+1-} g \frac{h_{j}^n+h_{j-1}^n}{2} \left(z_{j}-z_{j-1}\right)\right].
 \end{array}
\right.
$$
Summing the last two equations, we immediately get the following 
discrete version of \eqref{eq: equality proof discrete ineq}, namely
$$
\eta_j^{n+1-} - \eta_j^{n} + 2 a^2 \frac{\Delta t}{\Delta m_j} 
(q_{j+1/2}^{n+1-}-q_{j-1/2}^{n+1-}) \leq - \Delta t \, 2a^2 \, g \, \tau_j^n \left\{hu\partial_xz\right\}_j.
$$ 
The rest of the proof strictly follows the one proposed in \cite{quote6}. It is given here for the sake of completeness. With this in mind, let us define the energy $E$ such that $h E = \mathcal{U}$, which means 
$$
E = \frac{u^2}{2} + e(\tau) = \frac{u^2}{2} + e(\tau) + \frac{\Pi^2-\Pi^2}{2a^2} = e(\tau) +
\frac{\eta-\Pi^2}{2a^2}, 
$$
where we have set $e(\tau)=\frac{g}{2\tau}=\frac{gh}{2}$.
We clearly have
$$
E_{j}^{n+1-} - E_{j}^{n} = 
e(\tau_{j}^{n+1-}) - e(\tau_{j}^{n}) +
\frac{\eta_{j}^{n+1-}-\eta_{j}^{n}}{2a^2} - 
\frac{(\Pi_{j}^{n+1-})^2-(\Pi_{j}^{n})^2}{2a^2}
$$
so that, since $a^2-b^2=(a-b)^2+2b(b-a)$, we have
$$
E_{j}^{n+1-} - E_{j}^{n} = 
e(\tau_{j}^{n+1-}) - e(\tau_{j}^{n}) +
\frac{\eta_{j}^{n+1-}-\eta_{j}^{n}}{2a^2} - 
\frac{(\Pi_{j}^{n+1-}-\Pi_{j}^{n})^2}{2a^2}- 
\frac{\Pi_{j}^{n}(\Pi_{j}^{n+1-}-\Pi_{j}^{n})}{a^2}.
$$
But $I_j^{n+1-} = I_j^n$ gives $\Pi_{j}^{n+1-}-\Pi_{j}^{n} = -a^2 (\tau_{j}^{n+1-}-\tau_{j}^{n})$ so that 
$$
E_j^{n+1-} - E_j^{n} + \frac{\Delta t}{\Delta m_j} 
(q_{j+1/2}^{n+1-}-q_{j-1/2}^{n+1-}) \leq 
e(\tau_{j}^{n+1-}) - e(\tau_{j}^{n}) +
{\Pi_{j}^{n}(\tau_{j}^{n+1-}-\tau_{j}^{n})}
-\frac{a^2}{2}(\tau_{j}^{n+1-}-\tau_{j}^{n})^2 - \Delta t \, g \, \tau_j^n \left\{hu\partial_xz\right\}_j.
$$ 
Since the solution at time $t^n$ is at equilibrium, we have 
$\displaystyle \Pi^n_j=p(\tau^n_j)=-e'(\tau^n_j)=\frac{g}{2}(h^n_j)^2$, so that a Taylor expansion gives
$$
E_j^{n+1-} - E_j^{n} + \frac{\Delta t}{\Delta m_j} 
(q_{j+1/2}^{n+1-}-q_{j-1/2}^{n+1-}) \leq 
\frac{(e^{''}(\xi) - a^2)}{2}(\tau_{j}^{n+1-}-\tau_{j}^{n})^2 - \Delta t \, g \, \tau_j^n \left\{hu\partial_xz\right\}_j
$$ 
and
$$
E_j^{n+1-} - E_j^{n} + \frac{\Delta t}{\Delta m_j} 
(q_{j+1/2}^{n+1-}-q_{j-1/2}^{n+1-}) \leq 
\frac{(-p^{'}(\xi) - a^2)}{2}(\tau_{j}^{n+1-}-\tau_{j}^{n})^2 - \Delta t \, g \, \tau_j^n \left\{hu\partial_xz\right\}_j \leq - \Delta t \, g \, \tau_j^n \left\{hu\partial_xz\right\}_j
$$ 
by the Whitham subcharacteristic condition. This inequality is nothing but the expected entropy inequality but in Lagrangian coordinates. At this stage, it is very usual to combine 
the definition of the remap step (which, setting $X=h, h u$, gives $X^{n+1}_j$ 
as a convex combination of $X^{n+1-}_{j-1}$, $X^{n+1-}_{j}$ and $X^{n+1-}_{j+1}$ under the transport CFL condition) together with the Jensen inequality for 
the convex mapping $(h,hu)\mapsto \mathcal{U}(h,hu)$, in order to get the expected entropy inequality in Eulerian coordinates, namely
\[
\mathcal{U}_j^{n+1} - \mathcal{U}_j^{n} + \frac{\Delta t}{\Delta x_j} 
 \left(\mathcal{F}_{j+1/2}^{n+1-}-\mathcal{F}_{j-1/2}^{n+1-}\right) \leq  - \Delta t \, g \left\{hu\partial_xz\right\}_j.
\] 
We refer the reader to \cite{quote6} for more details. 
\\

\end{proof} %

\end{document}